%% file: write-manuscript.tex
\documentclass[preprint]{imsart}

\RequirePackage{amsthm,amsmath,amsfonts,amssymb}
\RequirePackage[numbers,sort&compress]{natbib}
\RequirePackage[colorlinks,citecolor=blue,urlcolor=blue]{hyperref}
\RequirePackage{graphicx}
\RequirePackage{enumerate}
\RequirePackage{pgf}
\RequirePackage{array,booktabs,tabularx}

\startlocaldefs
\theoremstyle{plain}

\newtheorem{theorem}{Theorem}[section]

\theoremstyle{definition}
\newtheorem{definition}[theorem]{Definition}

\newcommand{\R}{\mathbb{R}}

\newcommand{\E}{\mathbb{E}}

\newcommand{\Var}{\operatorname{Var}}
\newcommand{\Cov}{\operatorname{Cov}}

\newcommand{\dd}{\,\mathrm{d}}
\newcommand{\Normal}{\mathcal{N}}
\newcommand{\Emp}{\widehat P_n}
\newcommand{\fhat}{\widehat f}

\newcommand{\calE}{\mathcal{E}}

\newcommand{\calM}{\mathcal{M}}
\newcommand{\calL}{\mathcal{L}}

\newcommand{\AMISE}{\operatorname{AMISE}}
\newcommand{\tr}{\operatorname{tr}}

\endlocaldefs

\begin{document}

\begin{frontmatter}
\title{Density Evolution: A Multiscale View of Density Estimation}
\runauthor{K. You}
\runtitle{Density Evolution}

\begin{aug}
%%%%%%%%%%%%%%%%%%%%%%%%%%%%%%%%%%%%%%%%%%%%%%%
%% Additional information such as            %%
%% identifying the corresponding author must %%
%% be included in in the Acknowledgments     %%
%% section if necessary.                     %%
%% ORCID can be inserted by command:         %%
%% \orcid{0000-0000-0000-0000}               %%
%%%%%%%%%%%%%%%%%%%%%%%%%%%%%%%%%%%%%%%%%%%%%%%
\author[A,B,C]{\fnms{Kisung}~\snm{You}}

\address[A]{Department of Mathematics, Baruch College}
\address[B]{Department of Mathematics, The Graduate Center, City University of New York}
\address[C]{Department of Psychiatry, Yale University}

%\address[B]{Name2 Surname2 is Professor, Department,University or Company Name, City, Country\printead[presep={\ }]{e2}.}

%\address[C]{Name3 Surname3 is Distinguished Professor, Department,University or Company Name, City, Country\printead[presep={\ }]{e3,u1}.}

\end{aug}

\begin{abstract}
Density estimation is often presented as a choice among parametric summaries, finite mixtures, and nonparametric smoothers. 
This review argues for a complementary view: a data set can be studied through a path of densities indexed by smoothing scale, diffusion time, model complexity, density level, or noise level. We call this perspective density evolution. Under this lens, Gaussian kernel density estimation is heat flow from the empirical measure; scale-space methods, critical bandwidths, mode trees, and derivative-significance displays describe the evolution of modal and derivative structure; finite mixtures and mixture reduction provide compressed representations of kernel-like estimates; and cluster trees and persistent homology summarize evolving level-set topology. We review these connections and discuss inference for feature lifetimes, high-dimensional complications, and links with score-based generative diffusion. We also include three elementary structural results: nondegenerate modes move along smooth branches, a natural moment-preserving Gaussianization semigroup is forced to be Ornstein--Uhlenbeck, and shared-covariance Gaussian mixtures become log-concave once component means are sufficiently concentrated. Together, these ideas shift attention from choosing one density estimate to studying the multiscale probability landscape.
\end{abstract}

\begin{keyword}
\kwd{density estimation}
\kwd{KDE}
\kwd{scale space}
\kwd{Gaussian mixtures}
\kwd{mode trees}
\kwd{diffusion processes}
\kwd{topological data analysis}
\end{keyword}

\end{frontmatter}

%----------------------------------------------------------------------------
\section{Introduction}

Density estimation offers many summaries of a sample. Common choices include a Gaussian distribution, a finite mixture, a kernel density estimate, a cluster tree, and a topological summary. These objects are often presented as competing answers to one question. A finite data set, however, rarely has one natural resolution. It may appear unimodal at a coarse scale, multimodal at an intermediate scale, and irregular at a fine scale.

We use \emph{density evolution} as an organizing phrase for studying a family of probability landscapes indexed by scale, complexity, level, or noise. The phrase does not define a new field. It connects established work on kernel estimation, Gaussian scale space, mixtures, density clustering, topology, diffusion, and score-based generation. These indices do not define one common mathematical structure. Each path needs its own operator, target, ordering, and inferential interpretation.

Let $X_1,\ldots,X_n$ be independent observations from a probability measure $P$ on $\R^d$, and let $\Emp=n^{-1}\sum_i\delta_{X_i}$. For the Gaussian density $\varphi_{h^2I_d}$ with covariance $h^2I_d$, define
\begin{equation}
  f_h=\varphi_{h^2I_d}*P,
  \qquad
  \fhat_h=\varphi_{h^2I_d}*\Emp.
  \label{eq:gaussian-kde-intro}
\end{equation}
Equation~\eqref{eq:gaussian-kde-intro} gives $\E\{\fhat_h(x)\}=f_h(x)$ for fixed $h>0$ and $x$. An empirical mode or level set therefore estimates a feature of the smoothed population target $f_h$. Interpreting it as a feature of an unsmoothed density requires additional bias, smoothness, and separation conditions.

Applications usually report bandwidth $h$. The additive heat coordinate is $t=h^2$. Writing $u_t=f_{\sqrt t}$ and $\widehat u_t=\fhat_{\sqrt t}$ gives
\begin{equation*}
  \partial_t u_t=\frac12\Delta u_t,
  \qquad
  \partial_t\widehat u_t=\frac12\Delta\widehat u_t,
\end{equation*}
with $\widehat u_t\Rightarrow\Emp$ as $t\downarrow0$. Here $\Delta g=\tr\{\nabla^2g\}$ is the Laplacian. This heat-flow identity is established scale-space theory, not a new result \citep{koenderink_1984_StructureImages,lindeberg_1994_ScalespaceTheoryBasic}.

Other paths answer different questions. A mixture-reduction path changes representational complexity. An Ornstein--Uhlenbeck (OU) path can preserve the first two moments while approaching a fitted Gaussian. A cluster tree varies density level at a fixed density. Persistent homology records topology along a declared filtration. Score-based generative methods learn reverse dynamics along a noise path. Results about the Gaussian semigroup or one-dimensional modal causality do not transfer automatically among these constructions.

The classical roots lie in nonparametric density estimation \citep{rosenblatt_1956_RemarksNonparametricEstimates,parzen_1962_EstimationProbabilityDensity,wand_1994_KernelSmoothing,scott_2015_MultivariateDensityEstimation}. Gaussian scale space, critical bandwidths, mode trees, mode forests, and SiZer developed the multiscale modal view \citep{witkin_1983_ScalespaceFiltering,silverman_1981_UsingKernelDensity,minnotte_1993_ModeTreeTool,minnotte_1998_BumpyRoadModeForest,chaudhuri_1999_SiZerExplorationStructures}. Mixture, cluster-tree, and persistence literatures provide distinct paths and feature summaries \citep{lindsay_1995_MixtureModelsTheory,hartigan_1975_ClusteringAlgorithms,edelsbrunner_2002_TopologicalPersistenceSimplification}. Existing reviews cover modal statistics and related software more directly \citep{chacon_2020_ModalAgeStatistics,ameijeiras-alonso_2021_multimode}.

This article is a theory-focused narrative synthesis, not a systematic review. It isolates the operators, targets, feature maps, and order structures that underlie several multiscale constructions. The literature is selected to establish those connections and their limitations. Broader links to ridges, adaptive diffusion, high-dimensional estimation, and generative diffusion remain part of the conceptual scope.

The main contribution is organizational and mathematical. The framework distinguishes population paths from empirical paths, separates bandwidth from heat time, and records when feature collections do or do not form filtrations. These distinctions prevent structural properties of one evolution from being transferred to another without a defined map or operator.

The article contains three elementary structural results. The first is a local implicit-function description of nondegenerate critical-point motion, with prior one-dimensional treatment by Minnotte \citep{minnotte_1997_NonparametricTestingExistence}. The second characterizes an OU Gaussianization semigroup under explicit full-rank and moment assumptions. The third gives a sufficient log-concavity condition for shared-covariance Gaussian mixtures. These results formalize limited structural statements and do not supply global branch matching or inferential calibration.

Section~\ref{sec:framework} defines paths, targets, and feature summaries. Sections~\ref{sec:kde}--\ref{sec:diffusion} review smoothing, modes, mixtures, and diffusion. Section~\ref{sec:levelsets} corrects the relation between bandwidth and density-level filtrations. Section~\ref{sec:inference} separates structural, descriptive, inferential, and stability claims. Later sections discuss high dimension, generative diffusion, mathematical limitations, and open problems. The unified appendix contains all proofs and a counterexample to cross-bandwidth nesting.

%----------------------------------------------------------------------------
\section{A unifying framework}\label{sec:framework}

A density evolution consists of an initial measure, an indexed operator, and a feature map. The operator may act on the population distribution or on its empirical version. The feature map extracts the object to be interpreted.

\begin{definition}[Density evolution]
Let $\mathcal P$ be a class of probability measures on $\R^d$ and let $T$ be a partially ordered index set. A density evolution is a family of maps
\[
  \calE_t:\mathcal P\to \mathcal G,
  \qquad t\in T,
\]
where $\mathcal G$ is a class of densities, generalized densities, or real-valued functions. The population and empirical evolutions are
\[
  f_t=\calE_t(P),
  \qquad
  \fhat_t = \calE_t(\Emp),
  \qquad t\in T.
\]
A feature evolution is obtained by applying a target-specific feature map $\Phi$:
\[
  \Phi(f_t),
  \qquad
  \Phi(\fhat_t),
  \qquad t\in T.
\]
\end{definition}

The index set may be one-dimensional, such as bandwidth $h$, heat time $t$, or a component count $K$. A collection may also be indexed by bandwidth and density level. Multiple indices alone do not provide the inclusion maps required for a multiparameter persistence module. Common examples are summarized in Table~\ref{tab:evolution-taxonomy}.

Bandwidth carries measurement units. Centering, scaling, or otherwise transforming coordinates changes isotropic smoothing geometry and belongs in the target specification. Heat time $t=h^2$ is useful for semigroup calculations, while applications may remain easier to interpret on the $h$ scale.

\begin{table*}[t]
\centering
\caption{A taxonomy of density evolutions. The same sample can generate several paths, each emphasizing a different meaning of scale or complexity.}
\label{tab:evolution-taxonomy}
%\scriptsize
\begin{tabular}{p{0.17\linewidth}p{0.15\linewidth}p{0.28\linewidth}p{0.26\linewidth}}
\hline
Family & Index & Evolution operator & Typical features \\
\hline
Kernel smoothing
& bandwidth $h$
& convolution $\widehat P_n * K_h$
& modes, derivative signs, bumps \\
Heat flow
& time $t$
& Markov semigroup $P_t^*$
& entropy, Fisher information, modes \\
Adaptive diffusion
& time $t$
& spatially varying diffusion
& boundary-aware and local-scale features \\
Mixture path
& component number $K$ or penalty
& likelihood, merging, pruning
& mixture components, modal structure \\
Gaussianization
& time $t$
& Ornstein--Uhlenbeck-type flow
& path from empirical measure to fitted Gaussian \\
Cluster tree
& level $\lambda$
& superlevel sets $\{x:f(x)\ge\lambda\}$
& high-density clusters, merge events \\
Topological persistence
& filtration
& sublevel or superlevel sets
& components, loops, voids \\
Score diffusion
& noise time $t$
& forward and reverse stochastic differential equations
& score field, generation, denoising \\
\hline
\end{tabular}
\end{table*}

The central statistical shift is from estimating one density to studying target-specific features along a declared path. Let $\calM(h)$ be the set of nondegenerate modes of $\fhat_h$, and
\[
  N(h)=|\calM(h)|
\]
be the corresponding mode count. The empirical path asks where a matched mode is observed across bandwidths. This is distinct from inference on the modal count of an unsmoothed density. Similarly, at a fixed bandwidth the superlevel sets
\[
  \widehat L_{h,\lambda}=\{x:\fhat_h(x)\ge \lambda\}
\]
are nested as $\lambda$ decreases. Their components form an empirical cluster tree at that bandwidth. Varying $h$ produces a collection of such trees, not generally one joint filtration.

\subsection{Feature maps and events}
\label{subsec:feature-maps}

Several feature maps recur throughout the review. For smooth $f$, the displayed mode map records only nondegenerate interior modes.
\begin{align*}
  \Phi_{\mathrm{mode}}(f) &= \{x:\nabla f(x)=0,\ \nabla^2 f(x)\prec 0\},\\
  \Phi_{\mathrm{ridge}}(f) &= \text{density ridges or filamentary sets},\\
  \Phi_{\mathrm{level}}(f,\lambda) &= \pi_0\{x:f(x)\ge\lambda\},\\
  \Phi_{\mathrm{top}}(f,\lambda) &= \{H_k(\{x:f(x)\ge\lambda\}):k\ge0\},\\
  \Phi_{\mathrm{score}}(f) &= \nabla\log f.
\end{align*}
Here $\pi_0$ denotes connected components and $H_k$ denotes homology. The corresponding events include births, deaths, mergers, splits, and changes in qualitative derivative sign.

Feature duration needs a coordinate and a finite observation range. On $\mathcal H=[h_{\min},h_{\max}]$, a matched branch is reported by its observed existence interval $[h_b,h_d]$. An endpoint at $h_{\min}$ or $h_{\max}$ is left- or right-censored. The length $h_d-h_b$ changes after replacing $h$ by $h^2$ or $\log h$, so it is not an invariant measure of importance. On a general partially ordered set, the appropriate summary is the set of indices where the feature occurs. Subtraction of birth and death indices need not be defined.

\subsection{Four principles}
\label{subsec:principles}

The following principles guide the synthesis.

\begin{enumerate}[(1)]
\item \textbf{Scale is information, not nuisance.} Bandwidth selection is important, but a selected bandwidth can discard the information contained in the rest of the path.
\item \textbf{Feature existence is descriptive until calibrated.} A long branch can be informative, but its span alone does not establish significance. A short branch may be noise or a narrow population feature.
\item \textbf{Different summaries see different geometry.} Modes, mixture components, level-set components, ridges, and topological features are related but not interchangeable.
\item \textbf{Sampling uncertainty applies to the movie.} Since $\fhat_t$ is random for every $t$, all events along the path are random objects.
\end{enumerate}

These principles are already present in parts of the literature. SiZer assesses derivative signs across an eligible location-scale grid. Mode trees display observed branch intervals. Cluster trees and persistent homology encode density-level structure at a fixed density. The density-evolution perspective compares these objects without making their claims interchangeable.

%----------------------------------------------------------------------------
\section{Kernel density estimation as evolution}
\label{sec:kde}

Kernel density estimation is the simplest density evolution. Let $K$ be a kernel on $\R^d$ with $\int K(u)\dd u=1$ and define $K_h(x)=h^{-d}K(x/h)$. The population target and its empirical estimator are
\begin{equation*}
  f_h=K_h*P,
  \qquad
  \fhat_h=K_h*\Emp
  =\frac{1}{n}\sum_{i=1}^n K_h(\,\cdot-X_i).
\end{equation*}
At every fixed location, $\E\{\fhat_h(x)\}=f_h(x)$. This identity uses the linear convolution operator and should not be assumed for nonlinear mixture-fitting or clustering procedures. In the usual point-estimation view, $h$ is chosen by cross-validation, plug-in rules, normal-reference rules, or risk approximations. In the evolution view, $h$ indexes a family of increasingly smooth targets and estimates.

\begin{figure*}[t]
  \centering
  \includegraphics[width=\textwidth]{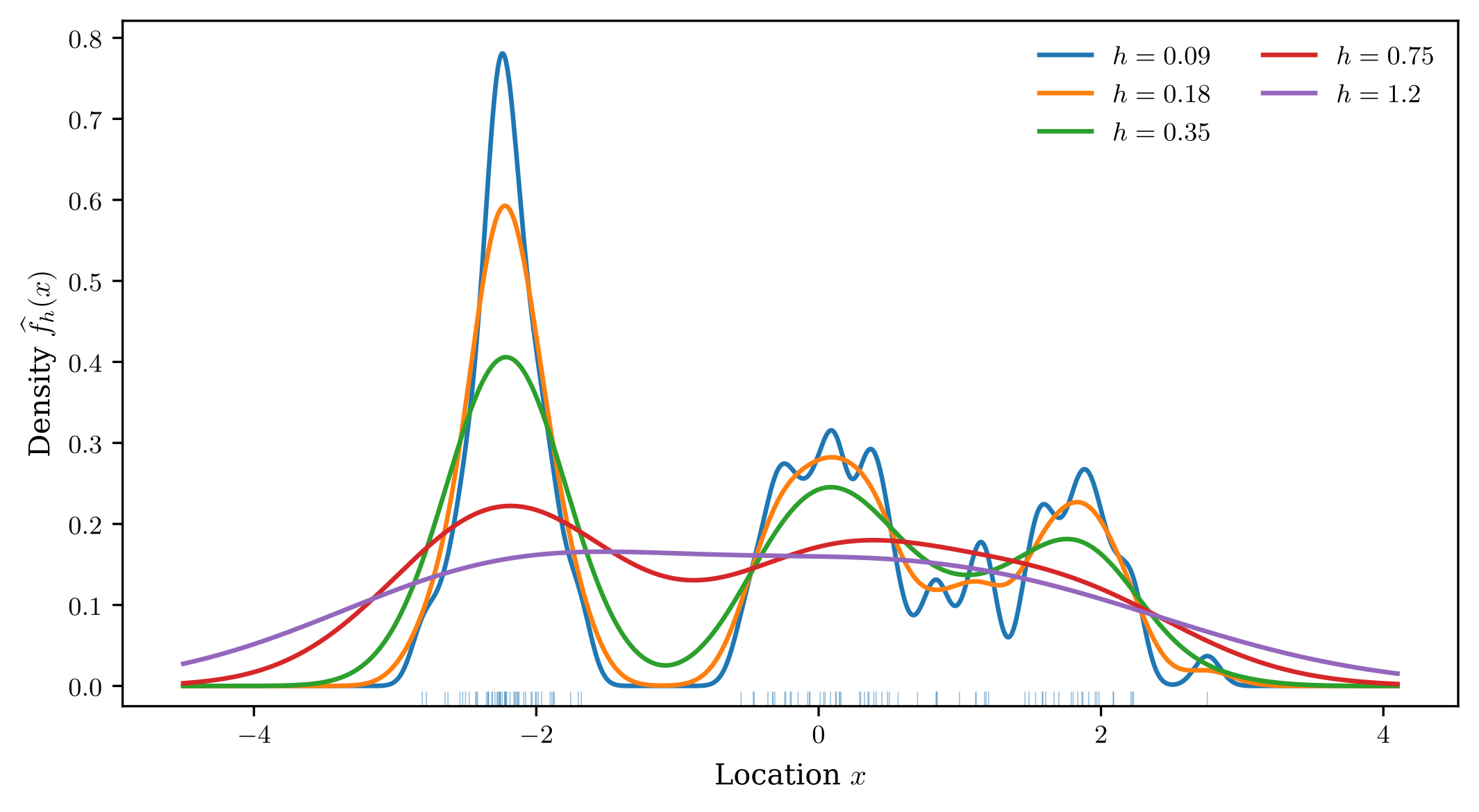}
  %\resizebox{\textwidth}{!}{\input{figures/kde_path.pgf}}
  \caption{An illustrative density path generated by changing the bandwidth of a Gaussian KDE. Small bandwidths reveal sample-level detail, while large bandwidths produce a coarse summary. Intermediate branches are descriptive until their sampling behavior is calibrated.}
  \label{fig:kde-path}
\end{figure*}

\subsection{Bias, variance, and the one-bandwidth tradition}
\label{subsec:bias-variance}

Suppose $P$ has density $f$ and $K$ is symmetric with second moment matrix $\mu_2(K)I_d$. Then
\[
  \E\{\fhat_h(x)\}=(K_h*f)(x).
\]
If $f$ is smooth, Taylor expansion gives
\begin{equation*}
  \E\{\fhat_h(x)\}-f(x)
  = \frac{h^2\mu_2(K)}{2}\Delta f(x)+o(h^2).
\end{equation*}
Under standard regularity conditions, the pointwise variance satisfies
\begin{equation*}
  \Var\{\fhat_h(x)\}
  = \frac{1}{nh^d}R(K)f(x)+o\{(nh^d)^{-1}\},
\end{equation*}
for $R(K)=\int K^2(u)\dd u$.
Integrated over $x$, this leads to the usual asymptotic mean integrated squared error (AMISE) approximation
\begin{equation}
  \AMISE(h)
  = \frac{h^4\mu_2(K)^2}{4}R(\Delta f)+\frac{R(K)}{nh^d},
  \label{eq:amise}
\end{equation}
where $R(g)=\int g^2$. The minimizer obeys
\begin{equation}
  h_{\AMISE}\asymp n^{-1/(d+4)}.
  \label{eq:amise-rate}
\end{equation}
This classical calculation is indispensable, explaining the bias-variance tradeoff and the curse of dimensionality. But it also encourages a one-bandwidth mindset: choose $h$ and then inspect $\fhat_h$. The density-evolution perspective keeps the calculation while changing the interpretation. Equation \eqref{eq:amise} is a risk approximation for one frame of a movie. It does not by itself answer which features persist across frames.

\subsection{Gaussian KDE as heat flow}
\label{subsec:kde-heat}

For the Gaussian kernel, KDE is exactly a diffusion after changing coordinates from bandwidth to heat time. Let
\begin{equation*}
  \varphi_t(x)=(2\pi t)^{-d/2}\exp\left(-\frac{\|x\|^2}{2t}\right),
  \qquad t>0.
\end{equation*}
Define the population and empirical heat paths
\begin{equation*}
  u_t=\varphi_t*P,
  \qquad
  \widehat u_t=\varphi_t*\Emp,
\end{equation*}
where $t=h^2$. Since
\[
  \partial_t\varphi_t=\frac12\Delta\varphi_t,
\]
linearity gives
\begin{equation*}
  \partial_tu_t=\frac12\Delta u_t,
  \qquad
  \partial_t\widehat u_t=\frac12\Delta\widehat u_t,
  \qquad
  \widehat u_t\Rightarrow\Emp\quad\text{as }t\downarrow0.
\end{equation*}
For a smooth function $g$, $\nabla g$ is its gradient, $\nabla^2g$ is its Hessian, and $\Delta g=\tr\{\nabla^2g\}=\sum_j\partial_{jj}g$. Thus a Gaussian KDE is the heat evolution of empirical point masses. This established identity gives a Markov semigroup interpretation of Gaussian smoothing.

The same identity also explains why the Gaussian kernel has a special role in scale-space theory. In image analysis, the Gaussian scale-space representation arises from axioms such as linearity, translation invariance, semigroup structure, and non-enhancement of local extrema in one dimension \citep{witkin_1983_ScalespaceFiltering, koenderink_1984_StructureImages, babaud_1986_UniquenessGaussianKernel, lindeberg_1994_ScalespaceTheoryBasic}. In statistics, the same semigroup gives a family of density estimates indexed by resolution.

\subsection{From density values to density features}
\label{subsec:kde-features}

Classical risk measures such as \eqref{eq:amise} quantify error in density values. Many uses of density estimation, however, are feature-driven: how many modes are present, where are the high-density regions, is there a ridge or filament, and which observations lie in low-density regions? A density path makes these questions scale-dependent.

Denote by $N(h)$ the number of nondegenerate modes of $\fhat_h$. For small $h$, $N(h)$ may be close to $n$ or may reflect sample-level irregularity. For large $h$, $N(h)$ may be one. A bandwidth chosen to minimize integrated squared error need not best reveal modal structure. Multimodality testing, bump hunting, and scale-space methods therefore require targets and calibrations beyond classical bandwidth selection.

%----------------------------------------------------------------------------
\section{Scale space, critical bandwidths, mode trees, and SiZer}
\label{sec:scale-space}

Scale space treats smoothing scale as a coordinate. Instead of plotting one curve $x\mapsto\fhat_h(x)$, one studies the surface
\[
  (x,h)\mapsto\fhat_h(x)
\]
or its derivatives. This idea is old in computer vision and signal processing \citep{witkin_1983_ScalespaceFiltering, koenderink_1984_StructureImages, lindeberg_1994_ScalespaceTheoryBasic}, but it has a distinctly statistical form in critical-bandwidth testing, mode trees, and SiZer.

\subsection{Critical bandwidths and multimodality}
\label{subsec:critical-bandwidth}

For a univariate Gaussian KDE, let $N(h)$ count nondegenerate modes and define the critical bandwidth
\begin{equation*}
  \widehat h_k = \inf\{h>0:N(h)\leq k\}.
\end{equation*}
Under the standard one-dimensional variation-diminishing conditions, $N(h)$ is nonincreasing in $h$ \citep{schoenberg_1951_PolyaFrequencyFunctions,silverman_1981_UsingKernelDensity}. Nongeneric shoulders can mark transitions. Multivariate Gaussian smoothing can create modes, so the merger-only interpretation does not extend to higher dimensions \citep{carreira-perpinan_2003_NumberModesGaussian}.

A critical bandwidth records the empirical smoothing needed to satisfy a modal restriction. It becomes a test statistic only after calibration. Composite modal nulls and shoulder densities make calibration delicate. Corrected critical-bandwidth and excess-mass procedures address limitations of the original bootstrap \citep{hall_2001_CalibrationSilvermansTest,fisher_2001_ModeTestingExcess,ameijeiras-alonso_2019_ModeTestingCritical}.

The ACR procedure tests exactly $k$ population modes against more than $k$. Nonrejection establishes neither an exact count nor an upper bound. Its implementations for $k=1$ and $k>1$ use different statistics, which should not be compared directly. Prespecified multiple tests also require multiplicity adjustment.

\subsection{Mode trees}
\label{subsec:mode-trees}

The mode tree plots estimated mode locations as a function of bandwidth \citep{minnotte_1993_ModeTreeTool}. In one dimension, define
\[
  \calM(h)=\{x:\fhat_h'(x)=0,\ \fhat_h''(x)<0\}
\]
in one dimension. The mode tree displays the set
\begin{equation*}
  \mathcal T_{\mathrm{mode}}=\{(x,h):x\in\calM(h)\}.
\end{equation*}
A branch's observed span is descriptive. Long existence does not establish significance. Short existence can reflect sampling noise or a narrow population feature. Figure~\ref{fig:mode-tree} illustrates this display. Its features are critical points of a smoothed estimate, not homology classes of a density-level filtration.

\begin{figure}[t]
  \centering
  \includegraphics[width=.55\textwidth]{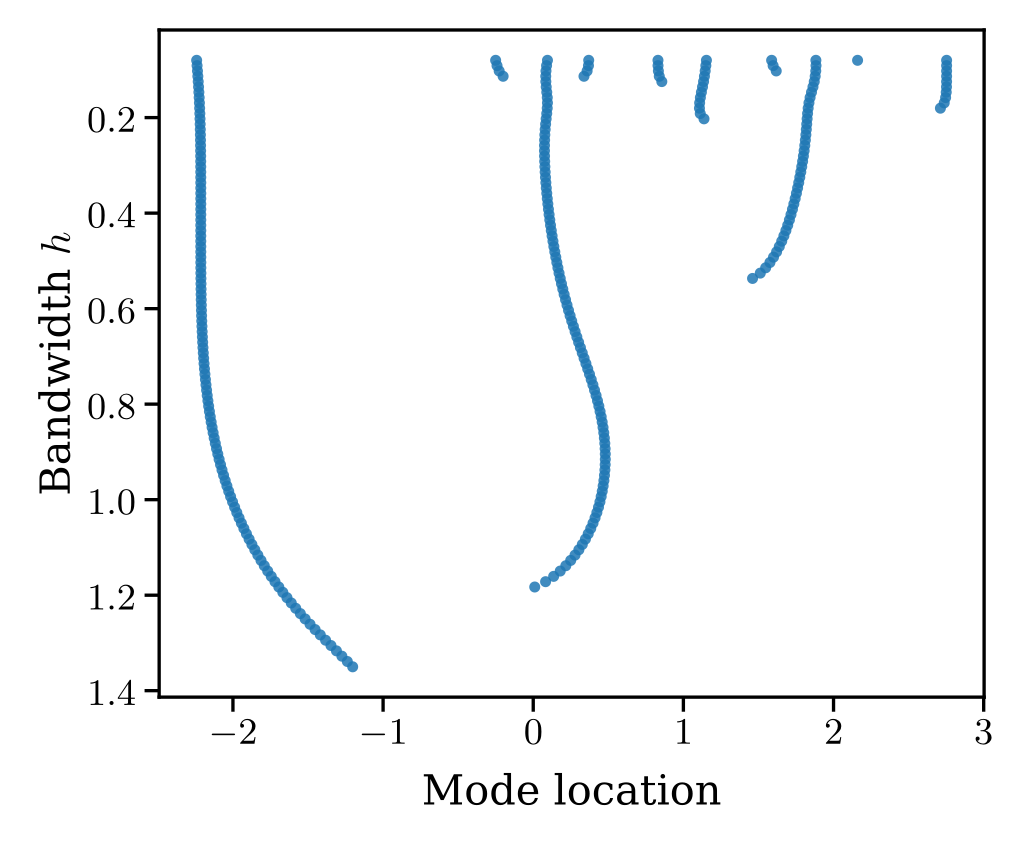}
  %\resizebox{.55\columnwidth}{!}{\input{figures/mode_tree.pgf}}
  \caption{An illustrative mode tree for the sample in Figure~\ref{fig:kde-path}. Each point marks an estimated mode at one bandwidth. The vertical axis is inverted, so fine scales appear near the top. Branch span alone is not an inferential significance measure.}
  \label{fig:mode-tree}
\end{figure}

Mode trees encourage a different interpretation of multimodality. One asks where empirical branches exist, whether their endpoints are censored by the displayed range, and which separate procedure can calibrate a population claim. In multivariate settings, modes can move through space, saddles and ridges matter, and modal counts need not change monotonically under smoothing \citep{carreira-perpinan_2003_NumberModesGaussian,ray_2005_TopographyMultivariateNormal}.

A mode forest shades location-bandwidth pixels by the fraction of bootstrap samples containing a mode there \citep{minnotte_1998_BumpyRoadModeForest}. This quantity is pixelwise modal occupancy. It is not the probability of a matched complete branch and is not a coverage probability. Branch-specific summaries require an explicit matching rule and treatment of unmatched or censored events.

The following theorem records the local implicit-function structure behind a mode tree. A one-dimensional treatment appears in Minnotte \citep{minnotte_1997_NonparametricTestingExistence}. The result does not give global continuation, statistical calibration, or a matching rule for modes recomputed on a discrete bandwidth grid.

\begin{theorem}[Local motion of modes]
\label{thm:mode-motion}
Let $U\subset\R^d$ be open and let $I\subset\R$ be an interval.
Suppose $f:U\times I\to\R$ is twice continuously differentiable in $x$,
differentiable in $t$, and that
\[
  F(x,t)=\nabla_x f(x,t)
\]
is continuously differentiable in $(x,t)$. Assume also that mixed
derivatives commute, so that
\[
  \partial_t\nabla_x f=\nabla_x\partial_t f.
\]
Let $(x_0,t_0)\in U\times I$ satisfy
\[
  \nabla_x f(x_0,t_0)=0,
  \qquad
  H_0:=\nabla_x^2 f(x_0,t_0) \text{ is nonsingular}.
\]
Then there are neighborhoods $V$ of $x_0$ and $J$ of $t_0$, and a unique
continuously differentiable curve $m:J\to V$, such that
\[
  m(t_0)=x_0,
  \qquad
  \nabla_x f(m(t),t)=0
  \quad(t\in J).
\]
Moreover,
\[
  \dot m(t)
  =-\{\nabla_x^2 f(m(t),t)\}^{-1}
     \nabla_x\{\partial_t f(m(t),t)\}.
\]
If $H_0$ is negative definite, then after possibly shrinking $J$, the
curve consists of local modes. For heat flow,
\(\partial_t f_t=(1/2)\Delta f_t\), this becomes
\[
  \dot m(t)
  =-\frac12\{\nabla_x^2 f_t(m(t))\}^{-1}
    \nabla_x\Delta f_t(m(t)).
\]
Consequently, fix \(t_0\in I\) and a compact set \(K\subset U\). If no
critical point of \(f(\cdot,t_0)\) lies on \(\partial K\), and all
critical points of \(f(\cdot,t_0)\) in \(K\) are nondegenerate, then for
all \(t\) sufficiently close to \(t_0\), the number and Morse index of
critical points in \(K\) are unchanged.
\end{theorem}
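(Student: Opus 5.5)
The plan is to apply the implicit function theorem to $F(x,t)=\nabla_x f(x,t)$, which is $C^1$ in $(x,t)$ by hypothesis. At $(x_0,t_0)$ we have $F(x_0,t_0)=0$, and the partial Jacobian $D_xF(x_0,t_0)=\nabla_x^2 f(x_0,t_0)=H_0$ is nonsingular. The implicit function theorem then yields neighborhoods $V\ni x_0$ and $J\ni t_0$ and a unique $C^1$ map $m:J\to V$ with $m(t_0)=x_0$ and $F(m(t),t)=0$. Uniqueness is understood in the sense that every zero of $F(\cdot,t)$ in $V$ with $t\in J$ equals $m(t)$.

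For the velocity formula, I would differentiate the identity $F(m(t),t)=0$ by the chain rule. This gives
\[
\nabla_x^2 f(m(t),t)\,\dot m(t)+\partial_t\nabla_x f(m(t),t)=0.
\]
The commutation assumption replaces $\partial_t\nabla_x f$ by $\nabla_x\partial_t f$. The Hessian along the curve stays invertible after shrinking $J$, because $D_xF$ is continuous and invertibility is an open condition. Solving for $\dot m$ gives the stated formula. The heat-flow case follows by substituting $\partial_t f=\tfrac12\Delta f$, provided $\nabla_x\Delta f$ exists, which holds for smooth heat solutions.

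If $H_0\prec0$, I would use continuity of $t\mapsto\nabla_x^2 f(m(t),t)$ together with continuity of the eigenvalues, or equivalently openness of the negative definite cone. After shrinking $J$, the Hessian along the curve remains negative definite. The second-derivative test then shows that each $m(t)$ is a strict local maximum. The same argument shows that the Morse index (the number of negative eigenvalues) is constant along any branch, since the Hessian stays nonsingular on a connected $J$.

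The final counting claim is the main obstacle, since it needs a compactness argument.

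\textbf{Step 1: the critical points at $t_0$ are finite.} Nondegenerate critical points are isolated by the inverse function theorem. Because $K$ is compact, there are only finitely many of them, say $x_1,\dots,x_k$, all lying in the interior of $K$ since none lies on $\partial K$.

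\textbf{Step 2: local continuation.} Apply the first part at each $x_i$ to obtain disjoint neighborhoods $V_i\subset \operatorname{int}K$, intervals $J_i$, and branches $m_i$ of constant index. Let $J=\bigcap_i J_i$.

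\textbf{Step 3: no critical points elsewhere in $K$.} The set $C=K\setminus\bigcup_i V_i$ is compact, and $F(\cdot,t_0)$ has no zeros on $C$. Hence $\|F(\cdot,t_0)\|\ge\varepsilon>0$ on $C$. By joint continuity of $F$ and compactness of $C$, uniform continuity on $C\times\bar J'$ for a compact $J'$ gives $\|F(\cdot,t)\|>0$ on $C$ for all $t$ near $t_0$.

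Combining the steps, for $t$ near $t_0$ the critical points of $f(\cdot,t)$ in $K$ are exactly $m_1(t),\dots,m_k(t)$, one in each $V_i$ by uniqueness. Their Morse indices equal those at $t_0$. This establishes that both the number and the Morse index of critical points in $K$ are unchanged.
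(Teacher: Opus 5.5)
Your proposal is correct and follows essentially the same route as the paper's proof. Both apply the implicit function theorem to $F=\nabla_x f$, differentiate $F(m(t),t)=0$ using the commutation hypothesis, isolate the finitely many nondegenerate critical points in disjoint neighborhoods inside the interior of $K$ with $\|F\|$ bounded away from zero on the compact complement, and use continuity of the Hessian eigenvalues along each branch for the Morse index (the paper additionally notes that a relative one-sided $J\subset I$ is used when $t_0$ is an endpoint of $I$).
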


The Morse index is the number of negative Hessian eigenvalues. The theorem is local. A branch can leave the declared domain or plotting range, and empirical branch matching can still be ambiguous near a degeneracy.

\subsection{SiZer and derivative signs across scale}
\label{subsec:sizer}

SiZer assesses derivative signs of a smoothed target across location and scale \citep{chaudhuri_1999_SiZerExplorationStructures,chaudhuri_2000_ScaleSpaceView}. A supported positive region followed by a negative region indicates an increasing-then-decreasing pattern. It need not locate a zero crossing precisely or identify the modal count of an unsmoothed density.

A simplified version of the inferential calculation is as follows. Let
\[
  \fhat_h'(x)=\frac{1}{nh^2}\sum_{i=1}^n K'\left(\frac{x-X_i}{h}\right)
\]
in one dimension. Under regularity conditions, a standardized statistic
\[
  Z_h(x)=\frac{\fhat_h'(x)-\E\{\fhat_h'(x)\}}{\widehat{\operatorname{se}}\{\fhat_h'(x)\}}
\]
is approximately Gaussian. SiZer uses simultaneous calibration to color eligible pixels where the derivative is positive, negative, or unresolved. The exact guarantee depends on the method, effective-sample-size rule, finite grid, and declared range.

Finite-grid calibration does not automatically cover a continuum or account for a bandwidth range selected from the same data. An aligned mode tree and SiZer map therefore provide complementary descriptive and derivative evidence.

\subsection{Causality and the multivariate caveat}
\label{subsec:causality}

A guiding principle in scale-space theory is causality: moving to coarser
scales should not create new meaningful structure. In one dimension, Gaussian smoothing has a variation-diminishing property related to total positivity, and the number of modes cannot increase under Gaussian convolution under suitable conditions \citep{schoenberg_1951_PolyaFrequencyFunctions, silverman_1981_UsingKernelDensity}. This justifies the interpretation that increasing bandwidth merges or removes modes.

In dimensions $d\ge2$, the picture is more delicate. Gaussian blurring can create modes for general functions, and the number of modes of a Gaussian mixture can behave in surprising ways \citep{carreira-perpinan_2003_NumberModesGaussian}. This does not invalidate multiscale density analysis, but it weakens the simple one-dimensional story. A major open question is whether useful classes of multivariate density evolutions can be constructed with a strong causality property for modes, ridges, or topological summaries.

%----------------------------------------------------------------------------
\section{Mean shift, modes, and gradient flows of the KDE}
\label{sec:mean-shift}

The modal structure of a KDE is linked to gradient ascent. This connection is important both computationally and conceptually, because it turns modes into attractors of a vector field.

For the isotropic Gaussian KDE, define
\[
  \mu_h(x)=
  \frac{\sum_{i=1}^nX_i\exp\{-\|x-X_i\|^2/(2h^2)\}}
       {\sum_{i=1}^n\exp\{-\|x-X_i\|^2/(2h^2)\}}.
\]
Direct differentiation gives the exact identity
\begin{equation}
  \mu_h(x)-x = h^2\nabla\log\fhat_h(x).
  \label{eq:gaussian-mean-shift-score}
\end{equation}
This identity links KDE stationary points, score fields, and iterative clustering. Fixed points are stationary points, but a Hessian check is required to identify modes. Mean-shift clustering assigns points to modal basins under suitable conditions \citep{fukunaga_1975_EstimationGradientDensity,cheng_1995_MeanShiftMode,comaniciu_2002_MeanShiftRobust,azzalini_2007_ClusteringNonparametricDensity}.

Each bandwidth has its own gradient field and attraction basins. Mean-shift iteration operates at one fixed bandwidth. It does not traverse smoothing scales or establish a correspondence between stationary points at different bandwidths.

%----------------------------------------------------------------------------
\section{From kernels to mixtures}
\label{sec:mixtures}

Kernel estimates and Gaussian mixtures are often taught separately: the former as nonparametric smoothing, the latter as parametric or semiparametric modeling. Yet the Gaussian KDE is a special Gaussian mixture,
\begin{equation*}
  \fhat_h(x)=\sum_{i=1}^n \frac1n\,\Normal(x\mid X_i,h^2I_d).
\end{equation*}
A general Gaussian mixture is
\begin{equation*}
  f_K(x)=\sum_{k=1}^K \pi_k\,\Normal(x\mid \mu_k,\Sigma_k),
  \quad
  \pi_k\ge0,~ \sum_{k=1}^K\pi_k=1.
\end{equation*}
The KDE corresponds to $K=n$, equal weights, fixed means, and common covariance. A single Gaussian corresponds to $K=1$. This comparison motivates a distinct discrete complexity path.

\subsection{Discrete complexity paths}
\label{subsec:mixture-paths}

Finite mixtures give a discrete path
\[
  K=1,2,3,\ldots,
\]
where $K$ is the number of components. Model selection criteria, penalized likelihood, minimum message length, reversible-jump Markov chain Monte Carlo, and Bayesian nonparametric mixtures all provide ways to move along or average over this path \citep{lindsay_1995_MixtureModelsTheory,richardson_1997_BayesianAnalysisMixtures,figueiredo_2002_UnsupervisedLearningFinite,fraley_2002_ModelBasedClusteringDiscriminant,mclachlan_2019_FiniteMixtureModels}. Bandwidth controls smoothing continuously, while $K$ controls representational complexity discretely. Their numerical coordinates and feature durations are not commensurate.

The phrase ``component'' must be used carefully. A mixture component is a representational object, while a mode is a feature of the resulting density. A mixture with $K$ Gaussian components can have more or fewer than $K$ modes, depending on dimension and covariance structure \citep{carreira-perpinan_2003_NumberModesGaussian,ray_2005_TopographyMultivariateNormal}.

\subsection{Kernel-to-mixture reduction}\label{subsec:kernel-to-mixture}

A particularly close precursor is the kernel-to-mixture reduction work of Scott and Szewczyk \citep{scott_2001_KernelsMixtures}. It treats kernel estimators as large mixtures and studies how to collapse components recursively to obtain a more parsimonious mixture representation. At a high level, one starts with a KDE-like mixture and repeatedly merges components that are close according to a density-similarity measure. If two Gaussian components with weights $\pi_a,\pi_b$, means $\mu_a,\mu_b$, and covariances $\Sigma_a,\Sigma_b$ are merged, a moment-matched replacement has weight
\[
  \pi=\pi_a+\pi_b,
\]
mean
\[
  \mu=\frac{\pi_a\mu_a+\pi_b\mu_b}{\pi},
\]
and covariance
\begin{align*}
  \Sigma
  &=\frac{\pi_a}{\pi}\left\{\Sigma_a+(\mu_a-\mu)(\mu_a-\mu)^T\right\}\\
  &+\frac{\pi_b}{\pi}\left\{\Sigma_b+(\mu_b-\mu)(\mu_b-\mu)^T\right\}.  
\end{align*}
This operation preserves the first two moments of the pair and creates a path from many components to fewer components. Related ideas appear in alternating kernel and mixture estimates \citep{priebe_2000_AlternatingKernelMixture} and in broader mixture-reduction algorithms.

It is worth emphasizing that this path differs from heat flow. Heat flow increases component variance while keeping component centers fixed. Mixture reduction changes the representation by merging centers and covariances. Both can be viewed as density evolutions, but they encode different notions of simplification.

\subsection{Mixing distributions}
\label{subsec:mixing-distributions}

A general mixture can be written as
\begin{equation*}
  f_G(x)=\int \Normal(x\mid \theta)\,\dd G(\theta),
\end{equation*}
where $G$ is a mixing distribution over parameters $\theta=(\mu,\Sigma)$ or a subset thereof. Then a single Gaussian is obtained when $G$ is a point mass, a finite mixture when $G$ is discrete with $K$ atoms, and a KDE when $G$ is the empirical distribution over locations with fixed covariance. This representation connects KDEs, finite mixtures, nonparametric maximum likelihood for mixtures, and Bayesian nonparametric mixtures  \citep{kiefer_1956_ConsistencyMaximumLikelihood, ferguson_1973_BayesianAnalysisNonparametric, lo_1984_ClassBayesianNonparametric, escobar_1995_BayesianDensityEstimation, lindsay_1995_MixtureModelsTheory}.

The density-evolution viewpoint treats $G$ itself as an object that may evolve. Atoms may split or merge, weights may shrink, covariance may inflate, and the mixing distribution may move from empirical discreteness toward a low-complexity summary.

%----------------------------------------------------------------------------
\section{Diffusion, semigroups, and Gaussianization}
\label{sec:diffusion}

The heat interpretation of Gaussian KDE suggests a broader class of density evolutions based on Markov processes. Let $(X_t)_{t\ge0}$ be a diffusion with generator
\begin{equation*}
\calL\phi(x)=b(x)\cdot\nabla\phi(x)+\frac12\tr\{a(x)\nabla^2\phi(x)\},
\end{equation*}
where $a(x)=\sigma(x)\sigma(x)^T$. If $f_t$ is the density of $X_t$, it satisfies the Fokker--Planck equation
\begin{equation}
  \partial_t f_t
  = -\nabla\cdot\{b f_t\}+\frac12\sum_{i,j}\partial_{ij}\{a_{ij}f_t\}
  = \calL^* f_t.
  \label{eq:fokker-planck}
\end{equation}
Thus a density evolution may be generated by a semigroup $P_t^*$:
\[
  f_t=P_t^* f_0.
\]
For empirical $f_0=\Emp$, this is understood weakly; for $t>0$ the distribution may have a smooth density.

\subsection{Adaptive diffusion}
\label{subsec:adaptive-diffusion}

Classical KDE uses the same smoothing everywhere. Adaptive density estimation modifies the amount of smoothing according to local data density \citep{abramson_1982_BandwidthVariationKernel, terrell_1992_VariableKernelDensity}. Diffusion-based KDE provides a principled partial differential equation (PDE) route. One chooses a diffusion process whose local behavior adapts to a pilot estimate, support constraints, or boundary behavior. Diffusion formulations show how adaptive KDE can be constructed from linear diffusion processes, yielding bandwidth selectors and boundary-aware estimators \citep{botev_2010_KernelDensityEstimation}.

Adaptive diffusion raises a basic question: should scale be global or local? A single bandwidth $h$ imposes one resolution on all regions. A spatially varying diffusion tensor can smooth aggressively in low-information regions and gently near sharp features. The resulting operator is more model-dependent and less transparent.

\subsection{A path from empirical measure to fitted Gaussian}
\label{subsec:ou-path}

Heat flow has an inconvenient endpoint for the original motivation of moving from many local components to one Gaussian: as $t\to\infty$, heat flow spreads mass over all of $\R^d$ rather than converging to the sample Gaussian. An Ornstein--Uhlenbeck-type construction gives a more direct path.

Let
\[
  \bar X=\frac1n\sum_{i=1}^n X_i,
  \qquad
  S=\frac1n\sum_{i=1}^n (X_i-\bar X)(X_i-\bar X)^T.
\]
Assume first that $S$ is positive definite. Let $a_t=e^{-t}$ and define
\begin{equation}
  g_t(x)=\frac1n\sum_{i=1}^n
  \Normal\left(x\mid \bar X+a_t(X_i-\bar X),\ (1-a_t^2)S\right).
  \label{eq:ou-gaussianization}
\end{equation}
At $t=0$, the covariance term vanishes and $g_t$ converges weakly to $\Emp$. For $t>0$, $g_t$ is an $n$-component Gaussian mixture. As $t\to\infty$, all component means collapse to $\bar X$ and the covariance becomes $S$, so
\begin{equation*}
  g_t \to \Normal(\bar X,S).
\end{equation*}
Moreover, the mean and covariance are preserved for all $t$:
\[
  \E_{g_t}(X)=\bar X,
  \qquad
  \Cov_{g_t}(X)=a_t^2S+(1-a_t^2)S=S.
\]
This path can be generated by an OU process after whitening by $S^{-1/2}$. In standardized coordinates, $Y_t=e^{-t}Y_0+\sqrt{1-e^{-2t}}Z$ with $Z\sim\Normal(0,I_d)$. Transforming back gives \eqref{eq:ou-gaussianization}.

The next result characterizes this formulation within a stated class of linear Gaussian evolutions. The OU form is forced by moment preservation and the semigroup property, up to a constant rescaling of time. We state the clean full-rank version. For singular covariance, one may instead work on the empirical affine span and use the induced degenerate Gaussian measure.

The finite-third-moment assumption is used only to identify the coefficient
of the non-Gaussian part through third central moments.
\begin{theorem}[Characterization of the moment-preserving Gaussianization semigroup]
\label{thm:ou-characterization}
Let $\mathcal P_3^+$ be the class of probability measures on $\R^d$ with finite third moments and positive definite covariance. For $P\in\mathcal P_3^+$, write $m_P$ and $S_P$ for its mean and covariance. Suppose a family of maps $(T_t)_{t\ge0}$ has the form
\begin{equation}
  T_tP
  =\mathcal{L}\{m_P+a(t)(X-m_P)+b(t)Z_P\},
  \label{eq:linear-gaussian-class-main}
\end{equation}
where $X\sim P$, $Z_P\sim\Normal(0,S_P)$ is independent of $X$, $a,b$ are continuous functions, $a(0)=1$, $b(0)=0$, $b(t)\ge0$, and $0\le a(t)\le1$. Assume that:
\begin{enumerate}
\item $T_t$ preserves mean and covariance for every $P\in\mathcal P_3^+$;
\item $(T_t)_{t\ge0}$ is a semigroup on $\mathcal P_3^+$, that is, $T_{s+t}P=T_s(T_tP)$ for all $s,t\ge0$ and all $P\in\mathcal P_3^+$;
\item the family is not the identity, meaning $a(t)<1$ for at least one $t>0$.
\end{enumerate}
Then there is a constant $c>0$ such that
\begin{equation*}
  a(t)=e^{-ct},
  \qquad
  b(t)=\sqrt{1-e^{-2ct}}.
\end{equation*}
Thus, after the time change $t\mapsto ct$, \eqref{eq:linear-gaussian-class-main} is exactly the OU Gaussianization path
\[
  T_tP
  =\mathcal{L}\{m_P+e^{-t}(X-m_P)+\sqrt{1-e^{-2t}}Z_P\}.
\]
\end{theorem}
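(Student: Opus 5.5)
Moment preservation gives the relation between $a$ and $b$, the semigroup property gives a functional equation for $a$, and continuity plus non-triviality pin down the exponential.

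\textbf{Step 1: moment preservation.} For $Y=m_P+a(t)(X-m_P)+b(t)Z_P$ we have $\E Y=m_P$ automatically. Independence gives
\[
\Cov(Y)=\{a(t)^2+b(t)^2\}S_P.
\]
Since $S_P$ is positive definite, assumption~1 forces $a(t)^2+b(t)^2=1$. With $b\ge0$ this gives $b(t)=\sqrt{1-a(t)^2}$. So it suffices to identify $a$.

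\textbf{Step 2: semigroup property.} Write $Q=T_tP$. By Step~1, $m_Q=m_P$ and $S_Q=S_P$. Then $T_s Q$ is the law of
\[
m_P+a(s)a(t)(X-m_P)+a(s)b(t)Z+b(s)Z',
\]
with $Z,Z'$ independent $\Normal(0,S_P)$ variables. Its Gaussian part is $\Normal\bigl(0,\{a(s)^2b(t)^2+b(s)^2\}S_P\bigr)$. Hence $T_{s+t}P=T_sT_tP$ says
\[
\mathcal L\{a(s+t)W+b(s+t)Z\}=\mathcal L\{a(s)a(t)W+\tilde b\,Z\},
\qquad W=X-m_P,
\]
where $\tilde b\ge0$ is a scalar. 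To separate the coefficient of $W$ from the Gaussian noise, compare third central moment tensors, which the remark before the theorem flags as the intended device. Gaussian terms contribute nothing, so $\E[Y^{\otimes3}]$ equals $a^3\,\E[W^{\otimes3}]$. Choose $P\in\mathcal P_3^+$ with nonzero third central moment, for example a full-rank skewed distribution such as a product of centered exponentials. Then
\[
a(s+t)^3=a(s)^3a(t)^3,
\]
and because $a\ge0$ taking real cube roots gives $a(s+t)=a(s)a(t)$.

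\textbf{Step 3: solving the Cauchy equation.} The function $a$ is continuous, takes values in $[0,1]$, and has $a(0)=1$. Suppose $a(t_1)=0$ for some $t_1$. Then $a(t_1/2^k)^{2^k}=0$, so $a(t_1/2^k)=0$ for every $k$, which contradicts continuity at $0$. Hence $a>0$ everywhere. Then $\log a$ is a continuous additive function, so $\log a(t)=-ct$, and $a\le1$ gives $c\ge0$. Assumption~3 excludes $c=0$, so $c>0$. Step~1 then yields $b(t)=\sqrt{1-e^{-2ct}}$. Note that $b(t)\ge0$ is needed to choose this square-root sign.

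\textbf{Main obstacle.} The delicate step is Step~2, extracting $a(s+t)=a(s)a(t)$ from equality of laws. Second moments cannot do it, since they only recover the identity already used in Step~1. Third moments resolve it, but only if we exhibit a $P\in\mathcal P_3^+$ with a nonvanishing third cumulant. Such a $P$ exists, so the argument goes through. Characteristic functions would give an alternative route, but they need more care.
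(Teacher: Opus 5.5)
Your proposal is correct and follows the paper's proof essentially step for step. Covariance preservation gives $a(t)^2+b(t)^2=1$. Comparing third central moment tensors for a full-rank skewed $P$ yields $a(s+t)=a(s)a(t)$, and the halving argument for positivity then turns $-\log a$ into a continuous additive function, so $a(t)=e^{-ct}$ with $c>0$. Your product of centered exponentials serves the same purpose as the paper's example of one skewed coordinate plus independent Gaussian coordinates.
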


A second structural fact is that shared-covariance Gaussian mixtures have a simple log-concavity certificate. This gives explicit terminal times after which heat-flow and OU density evolutions can no longer have multiple modes or nonconvex superlevel sets. The theorem is deliberately stated for general unequal weights because the proof uses only the common-covariance mixture geometry.

\begin{theorem}[Terminal log-concavity for shared-covariance mixtures]
\label{thm:terminal-logconcavity}
Let
\begin{equation*}
  f(x)=\sum_{i=1}^n \pi_i\,\varphi_\Sigma(x-\mu_i),
  \quad
  \pi_i>0, ~\sum_{i=1}^n\pi_i=1,~
  \Sigma\succ0.
\end{equation*}
Define posterior weights
\[
  w_i(x)=\frac{\pi_i\varphi_\Sigma(x-\mu_i)}{f(x)},
  \qquad
  \bar\mu_w(x)=\sum_i w_i(x)\mu_i,
\]
and the weighted covariance matrix of component means
\[
  C_w(x)=\sum_i w_i(x)\{\mu_i-\bar\mu_w(x)\}\{\mu_i-\bar\mu_w(x)\}^T.
\]
Then
\begin{equation*}
  \nabla^2\log f(x)
  =-\Sigma^{-1}+\Sigma^{-1}C_w(x)\Sigma^{-1}.
\end{equation*}
Consequently, $f$ is log-concave if $C_w(x)\preceq\Sigma$ for all $x$, and strictly log-concave if $C_w(x)\prec\Sigma$ for all $x$. In particular, if there is a point $c\in\R^d$ such that
\begin{equation*}
  \max_i \|\Sigma^{-1/2}(\mu_i-c)\|^2<1,
\end{equation*}
then $f$ is strictly log-concave.

The following consequences hold.
\begin{enumerate}
\item For the heat-flow KDE
\[
  \widehat u_t(x)=\frac1n\sum_{i=1}^n\varphi_{tI_d}(x-X_i),
\]
if $\max_i\|X_i-c\|^2\le R^2$, then $\widehat u_t$ is strictly log-concave for every $t>R^2$.
\item For the OU path \eqref{eq:ou-gaussianization}, assume $S\succ0$ and define
\[
  R_S^2=\max_i (X_i-\bar X)^T S^{-1}(X_i-\bar X).
\]
Then $g_t$ is strictly log-concave whenever
\begin{equation*}
  t>\frac12\log(1+R_S^2).
\end{equation*}
\end{enumerate}
At those times the density has a unique mode, and all positive-level
superlevel sets are convex.
\end{theorem}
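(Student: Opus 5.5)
The proof rests on an exact formula for the Hessian. Every later claim is then a matrix inequality that follows from it. Write $f(x)=\sum_i\pi_i\varphi_\Sigma(x-\mu_i)$ and use $\nabla\varphi_\Sigma(x-\mu_i)=-\Sigma^{-1}(x-\mu_i)\varphi_\Sigma(x-\mu_i)$. This gives
\[
\nabla\log f(x)=-\Sigma^{-1}\{x-\bar\mu_w(x)\}.
\]
To differentiate again we need $\nabla w_i(x)$. From $\log w_i=\log\pi_i+\log\varphi_\Sigma(x-\mu_i)-\log f$, we get
\[
\nabla w_i=w_i\,\Sigma^{-1}\{\mu_i-\bar\mu_w\}.
\]
The quadratic terms in $x$ cancel between the numerator and $f$, which is where the shared covariance is used. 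Hence the Jacobian of $\bar\mu_w$ is $\sum_i\mu_i(\nabla w_i)^T=\sum_i w_i\mu_i(\mu_i-\bar\mu_w)^T\Sigma^{-1}=C_w\Sigma^{-1}$, using $\sum_i w_i(\mu_i-\bar\mu_w)=0$. Differentiating $-\Sigma^{-1}(x-\bar\mu_w)$ then yields
\[
\nabla^2\log f=-\Sigma^{-1}+\Sigma^{-1}C_w\Sigma^{-1},
\]
which is symmetric as required. Conjugating by $\Sigma^{1/2}$ shows that $\nabla^2\log f\preceq0$ (resp.\ $\prec0$) if and only if $\Sigma^{-1/2}C_w\Sigma^{-1/2}\preceq I$ (resp.\ $\prec I$), that is, $C_w\preceq\Sigma$ (resp.\ $\prec$). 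Concavity of $\log f$ follows from a nonpositive Hessian everywhere. For strict concavity, a negative definite Hessian at every point suffices.

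For the ball criterion, I would use that a covariance is dominated by the second moment about any fixed point: $C_w(x)\preceq\sum_i w_i(\mu_i-c)(\mu_i-c)^T$. In whitened form, $\Sigma^{-1/2}C_w\Sigma^{-1/2}\preceq\sum_i w_i v_iv_i^T$ with $v_i=\Sigma^{-1/2}(\mu_i-c)$. This matrix has operator norm at most $\sum_i w_i\|v_i\|^2\le\max_i\|v_i\|^2<1$, so $C_w\prec\Sigma$ uniformly in $x$. The main point requiring care is exactly here: the bound must not depend on $x$, since the weights vary. The convex-combination bound handles this, because $\max_i\|v_i\|^2$ is attained and strictly less than $1$.

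The two consequences are specialisations of the ball criterion.
\begin{enumerate}
\item For the heat-flow KDE, $\Sigma=tI_d$ and $\mu_i=X_i$, so $\|\Sigma^{-1/2}(X_i-c)\|^2\le R^2/t<1$ exactly when $t>R^2$.
\item For the OU path, $\Sigma=(1-a_t^2)S$ and $\mu_i=\bar X+a_t(X_i-\bar X)$, with $c=\bar X$. The criterion becomes $a_t^2R_S^2/(1-a_t^2)<1$, equivalently $e^{-2t}(1+R_S^2)<1$, i.e.\ $t>\tfrac12\log(1+R_S^2)$.
\end{enumerate}

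Finally, I would check uniqueness of the mode and convexity of the superlevel sets. Since $f$ is smooth, positive and tends to $0$ at infinity, it attains a maximum. Strict concavity of $\log f$ makes any critical point the unique global maximiser, so no other local modes exist. For $\lambda>0$, the set $\{f\ge\lambda\}=\{\log f\ge\log\lambda\}$ is a superlevel set of a concave function, hence convex.
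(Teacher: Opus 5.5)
Your proposal is correct and follows the paper's proof essentially step for step. You use the same gradient and posterior-weight derivative computations to get $D\bar\mu_w=C_w\Sigma^{-1}$ and the Hessian identity, and your whitened bound $\Sigma^{-1/2}C_w\Sigma^{-1/2}\preceq\sum_i w_iv_iv_i^T$ with the operator-norm estimate is just the matrix form of the paper's variance-about-$c$ plus Cauchy--Schwarz argument, while the heat-flow and OU specialisations and the unique-mode and convex-superlevel-set conclusion coincide with the paper's.
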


\begin{figure*}[t]
  \centering
  \includegraphics[width=\textwidth]{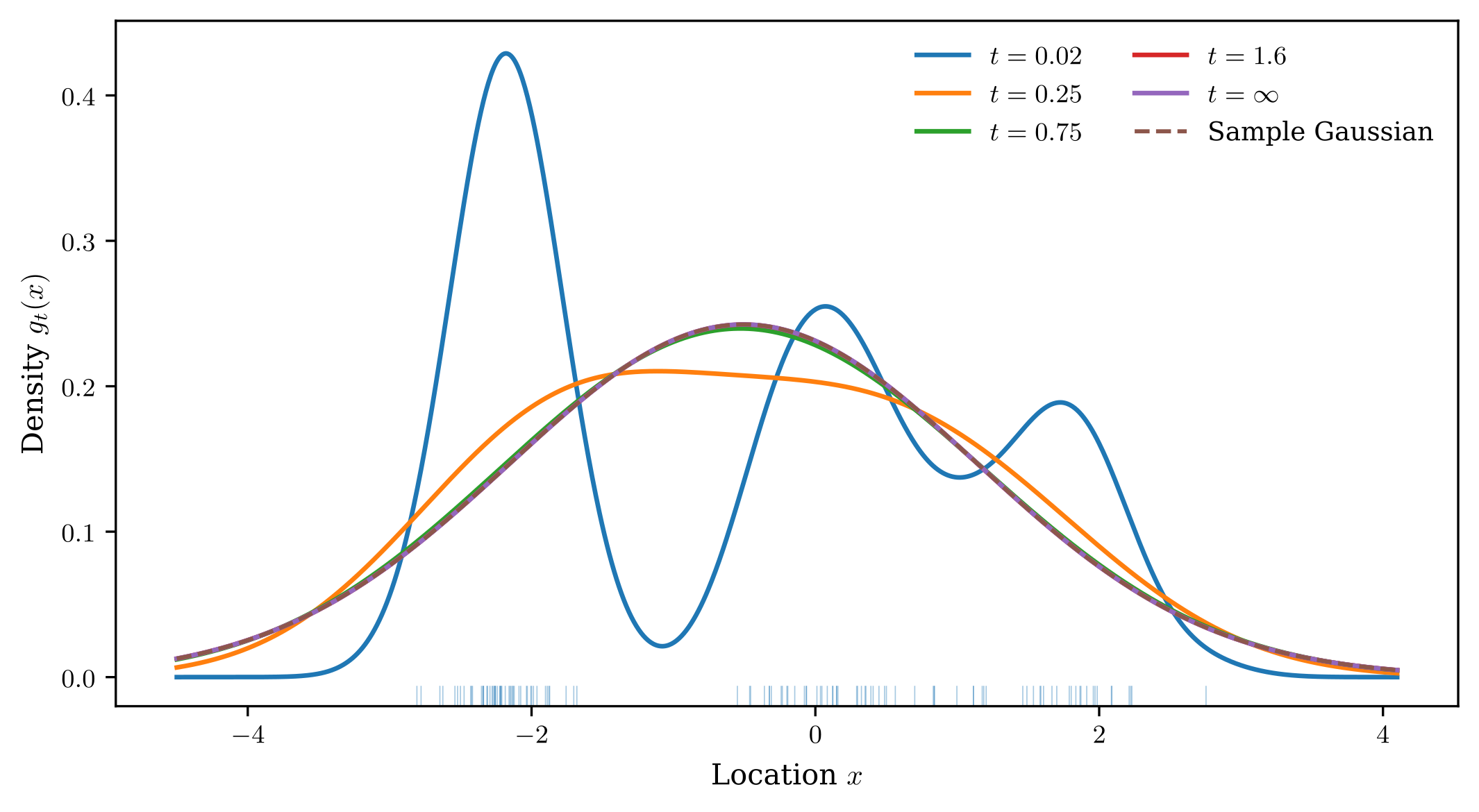}
  %\resizebox{\textwidth}{!}{\input{figures/ou_path.pgf}}
  \caption{An Ornstein--Uhlenbeck-type Gaussianization path. Unlike ordinary heat flow, this path preserves the sample mean and covariance and converges to the fitted Gaussian. It provides a mathematically explicit continuum from empirical spikes to an $n$-component mixture to a single Gaussian summary.}
  \label{fig:ou-path}
\end{figure*}

Figure~\ref{fig:ou-path} illustrates this conceptual construction. It is not an inferential procedure. Under the full-rank assumptions, the path preserves the first two moments and is equivariant under invertible affine maps. Working on the empirical affine span retains those properties when $S$ is singular. Regularizing to $S_\epsilon=S+\epsilon I_d$ defines a different full-dimensional path because
\[
  \Cov(g_t^\epsilon)=S+(1-e^{-2t})\epsilon I_d.
\]
It therefore does not preserve the sample covariance. Whether either construction supports useful model selection or calibrated feature analysis remains open.

\subsection{Entropy and information dissipation}
\label{subsec:entropy}

Diffusion paths often have monotone information quantities. For heat flow, let $f_t=f_0*\varphi_t$ and define differential entropy
\[
  H(f_t)=-\int f_t\log f_t\dd x
\]
and Fisher information
\[
  I(f_t)=\int \|\nabla\log f_t\|^2 f_t\dd x.
\]
A formal integration-by-parts calculation gives de Bruijn's identity
\begin{equation*}
  \frac{\dd}{\dd t}H(f_t)=\frac12 I(f_t).
\end{equation*}
Indeed,
\begin{align*}
  \frac{\dd}{\dd t}H(f_t)
  &= -\int (\partial_t f_t)\log f_t\dd x \\
  &= -\frac12\int (\Delta f_t)\log f_t\dd x \\
  &= \frac12\int \frac{\|\nabla f_t\|^2}{f_t}\dd x
   = \frac12 I(f_t),
\end{align*}
assuming boundary terms vanish. Thus heat flow increases entropy and smooths information. The OU semigroup has an analogous relative-entropy dissipation identity with respect to its Gaussian invariant distribution  \citep{bakry_2014_AnalysisGeometryMarkov}. The Wasserstein gradient-flow formulation of Fokker--Planck equations provides another geometric interpretation of such evolutions \citep{jordan_1998_VariationalFormulationFokkerPlanck, villani_2009_OptimalTransportOld}.

For statistics, these identities suggest continuous measures of distributional complexity. Instead of counting modes or components, one can summarize the path through integrated Fisher information, entropy production, or changes in relative entropy. These summaries may be less visually immediate than mode trees, but they can be stable in high dimension and connect density estimation to information theory \citep{stam_1959_InequalitiesSatisfiedQuantities, cover_2005_ElementsInformationTheory}.

%----------------------------------------------------------------------------
\section{Level sets, cluster trees, and topology}
\label{sec:levelsets}

Modes summarize local maxima. Many clustering and topological questions are better expressed through level sets. For a density $f$ and a level $\lambda$, define the upper level set
\begin{equation*}
  L_\lambda(f)=\{x\in\R^d:f(x)\ge\lambda\}.
\end{equation*}
As $\lambda$ decreases, components of $L_\lambda(f)$ appear and merge. The collection of these components forms the cluster tree.

\subsection{Cluster trees}
\label{subsec:cluster-trees}

Hartigan's classical density-clustering view defines clusters as high-density connected components \citep{hartigan_1975_ClusteringAlgorithms}. More formally, the cluster tree is
\begin{equation*}
  \mathcal C_f = \{\text{connected components of }L_\lambda(f):\lambda\ge0\},
\end{equation*}
with parent-child relationships induced by set inclusion as $\lambda$ changes. This object records a hierarchy without choosing a fixed number of clusters.

The cluster-tree literature has studied estimation, pruning, and consistency 
\citep{stuetzle_2003_EstimatingClusterTree, stuetzle_2010_GeneralizedSingleLinkage, chaudhuri_2010_RatesConvergenceCluster, chaudhuri_2014_ConsistentProceduresCluster, eldridge_2015_HartiganConsistencyMerge}. Level-set estimation and excess-mass methods provide complementary inferential foundations 
\citep{polonik_1995_MeasuringMassConcentrations, tsybakov_1997_NonparametricEstimationDensity, rigollet_2009_OptimalRatesPlugin, samworth_2010_AsymptoticsOptimalBandwidth, rinaldo_2010_GeneralizedDensityClustering, steinwart_2015_FullyAdaptiveDensitybased}.  Density-based algorithms such as HDBSCAN, a hierarchical density-based clustering method, can also be interpreted as practical cluster-tree approximations  \citep{campello_2015_HierarchicalDensityEstimates}.

At bandwidth $h$, distinguish the population and empirical superlevel sets
\begin{equation}
  L_{h,\lambda}=\{x:f_h(x)\ge\lambda\},
  \qquad
  \widehat L_{h,\lambda}=\{x:\fhat_h(x)\ge\lambda\}.
  \label{eq:two-parameter-level-set}
\end{equation}
For fixed $h$, the sets in \eqref{eq:two-parameter-level-set} are nested as $\lambda$ decreases. This density-level ordering supplies the inclusion maps for a cluster tree. A plausible empirical tree is not itself a consistency or uncertainty statement. Such results require assumptions on geometry, separation, and estimation \citep{chaudhuri_2014_ConsistentProceduresCluster,eldridge_2015_HartiganConsistencyMerge,steinwart_2023_AdaptiveClusteringKDE}.

Raw density levels are difficult to compare across bandwidths. When a threshold exists, define the probability-content region
\begin{equation*}
  R_{h,\alpha}=\{x:f_h(x)\ge c_{h,\alpha}\},
  \qquad
  \int_{R_{h,\alpha}}f_h(x)\dd x=\alpha.
\end{equation*}
Equal content improves comparability, but it does not imply nesting in $h$. Figure~\ref{fig:levelset-heatmap} is therefore an indexed sensitivity surface rather than a bifiltration.

\begin{figure*}[t]
  \centering
  \includegraphics[width=\textwidth]{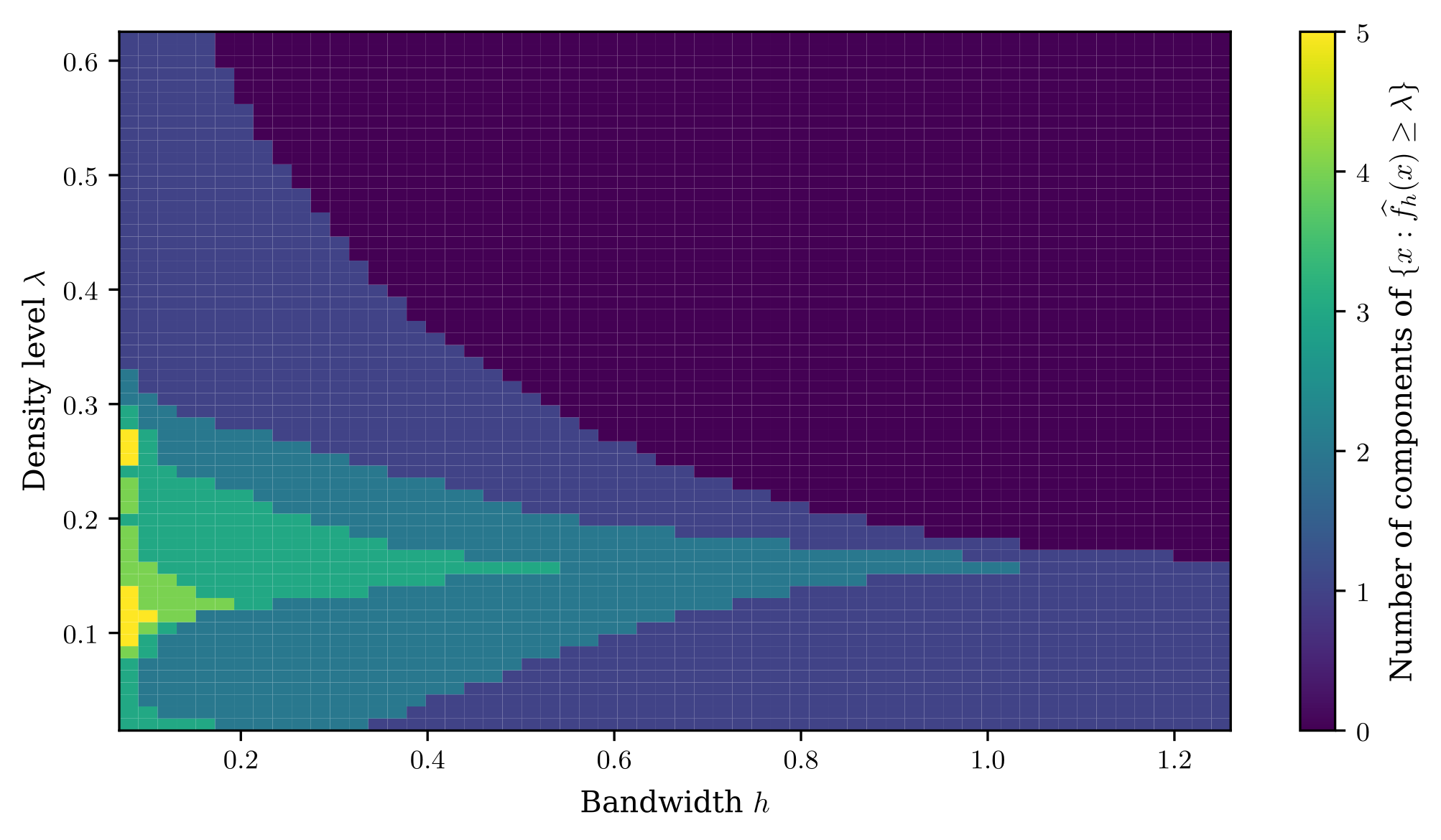}
  %\resizebox{\textwidth}{!}{\input{figures/levelset_heatmap.pgf}}
  \caption{An illustrative sensitivity surface of empirical component counts over bandwidth and density level. For each fixed bandwidth, decreasing levels form a valid filtration. The collection across bandwidths is not generally a bifiltration because cross-bandwidth inclusions can fail.}
  \label{fig:levelset-heatmap}
\end{figure*}

\subsection{Persistent homology}
\label{subsec:persistence}

Persistent homology studies topological features that are born and die along a filtration \citep{edelsbrunner_2002_TopologicalPersistenceSimplification,carlsson_2009_TopologyData,edelsbrunner_2010_ComputationalTopologyIntroduction,wasserman_2018_TopologicalDataAnalysis}. For density analysis, the superlevel sets at one fixed bandwidth form a natural filtration as $\lambda$ decreases. Zero-dimensional homology tracks connected components. Higher-dimensional homology tracks loops, voids, and related structure.

The persistence diagram records birth and death levels. A component's persistence is a difference of density levels, not the bandwidth span of a modal branch. Short persistence alone does not establish noise. A sampling threshold requires calibration for the relevant fixed-bandwidth filtration, while numerical discretization remains a separate uncertainty source.

Stability inequalities and confidence sets can support calibrated diagram thresholds \citep{cohensteiner_2007_StabilityPersistenceDiagrams,fasy_2014_ConfidenceSetsPersistence,chazal_2011_GeometricInferenceProbability,phillips_2015_GeometricInferenceKernel,chazal_2018_RobustTopologicalInference}. A threshold calibrated at one bandwidth does not control an entire bandwidth-indexed family.

\subsection{Collections across bandwidth}
\label{subsec:two-param-persistence}

Computing a cluster tree or diagram at each bandwidth produces an indexed collection. It does not automatically produce a two-parameter persistence module. Superlevel sets are nested in density level at fixed $h$, but generally not in $h$. Appendix~\ref{app:cross-bandwidth} gives an elementary Gaussian counterexample in which a fixed-level interval first expands and then contracts.

Valid summaries include fixed-bandwidth slices and adjacent bottleneck distances justified by stability. Coherently updated persistence pairings can form a vineyard \citep{cohensteiner_2006_VinesVineyards}. A generic nearest-diagram match is only a descriptive correspondence. Zigzag persistence requires explicitly defined maps \citep{carlsson_2010_ZigzagPersistence}.

Probability-content regions and component-count strips remain useful sensitivity displays. Their interpretation depends on the spatial domain, boundary treatment, connectivity, and grid resolution. These choices must be reported and varied when conclusions depend on them.

%----------------------------------------------------------------------------
\section{Inferential targets along density paths}
\label{sec:inference}

A density path estimated from data is random. Its modes, merge levels, level sets, ridges, and diagrams are therefore random. Three evidence labels prevent common misinterpretations. Descriptive evidence reports an empirical path. Inferential evidence controls an error probability for a named population target. Stability evidence reports sensitivity to resampling, preprocessing, or numerical choices.

Table~\ref{tab:method-comparison} distinguishes the claims supported by the principal scale-space tools.

\input{tables/method-comparison}

\subsection{Uniform density bands}
\label{subsec:uniform-bands}

A basic object is a confidence band for $f_h$ or for $\fhat_h-f_h$. Classical results study global deviations such as
\[
  \sup_x |\fhat_h(x)-\E\fhat_h(x)|
\]
and integrated deviations \citep{bickel_1973_GlobalMeasuresDeviations,vandervaart_1996_WeakConvergenceEmpirical,gine_2010_ConfidenceBandsDensity}. For density evolution, one wants bands over both $x$ and $h$:
\begin{equation*}
  \sup_{(x,h)\in\mathcal X\times\mathcal H}
  \left|\frac{\fhat_h(x)-\E\fhat_h(x)}{\widehat{\operatorname{se}}\{\fhat_h(x)\}}\right|.
\end{equation*}
Such bands are conceptually aligned with SiZer, but feature maps may be nonlinear and nonsmooth. Pointwise density bands do not justify scanning derivative signs across a location-bandwidth display.

Uniform bands for derivatives are particularly important. Nondegenerate modes and ridges depend on gradients and Hessians, so their analysis can require simultaneous derivative control \citep{duong_2008_FeatureSignificanceMultivariate,chacon_2013_DatadrivenDensityDerivative}. SiZer and modal tests address different targets. A modal rejection does not certify every visible branch, while nonrejection can reflect limited power near shoulders or small components.

\subsection{Bootstrap and feature stability}
\label{subsec:bootstrap-lifetimes}

Bootstrap resampling can assess whether empirical features reappear \citep{efron_1979_BootstrapMethodsAnother}. Recurrence is a stability summary unless a matching rule, estimand, calibration, and coverage result are supplied. Near a degenerate critical point, perturbations can create or remove features or switch branch identities.

Branch frequencies, pixelwise occupancy, unmatched proportions, censored endpoints, and confidence intervals are distinct outputs. An event-time distribution may mix different events or condition on survival. Similar issues occur in cluster-tree inference, ridge estimation, and topological persistence. Stable set summaries or integrated counts can be preferable to exact pointwise matching.

\subsection{Modes, ridges, and Hessian-based inference}
\label{subsec:modes-ridges-inference}

A nondegenerate interior mode has zero gradient and negative definite Hessian. Degenerate modes and shoulders require separate treatment. Nonparametric inference for modal curvature has been developed using Hessian eigenvalues \citep{genovese_2016_NonParametricInferenceDensity}. Density ridges generalize modes to lower-dimensional structures such as filaments and principal curves \citep{genovese_2014_NonparametricRidgeEstimation,chen_2015_AsymptoticTheoryDensity}.

A density-evolution version asks how evidence varies with scale. Any simultaneous statement must match its declared location-bandwidth domain. A fixed-bandwidth persistence confidence set cannot be relabeled as simultaneous evidence for a path of diagrams.

\subsection{Bayesian density evolution}
\label{subsec:bayesian-paths}

Bayesian nonparametric mixtures place a posterior distribution on the density \citep{ferguson_1973_BayesianAnalysisNonparametric,lo_1984_ClassBayesianNonparametric,escobar_1995_BayesianDensityEstimation}. A density-evolution version places a posterior on the whole path
\[
  \{f_t:t\in T\}\mid X_1,\ldots,X_n.
\]
This can be induced by smoothing posterior density draws, by evolving posterior draws through a semigroup, or by putting priors directly on stochastic flows. The posterior can then be pushed through feature maps to summarize modal counts, observed branch intervals, cluster-tree structure, or topological features.

Bayesian analysis can target a probability statement such as the posterior probability of at least three smoothed modes throughout a declared interval. The target, prior, smoothing coordinate, and conditioning must remain explicit. Posterior feature trees still require matching and summarization because branches are not canonically labeled.

The analysis should declare its transformation, spatial domain, feature definition, and finite scale grid. Data-selected ranges or features create selection questions. Sensitivity checks should vary plausible preprocessing and numerical choices, including unfavorable alternatives.

%----------------------------------------------------------------------------
\section{High-dimensional density evolution}
\label{sec:high-dimensional}

Classical KDE theory is most transparent when $d$ is fixed and $n\to\infty$. Modern data often violate this regime. Embeddings from genomics, images, language models, and neural networks can have dimension comparable to or larger than the sample size. Density evolution in high dimension must confront both statistical and geometric obstacles.

\subsection{The curse of dimensionality revisited}
\label{subsec:curse}

The AMISE rate \eqref{eq:amise-rate} shows that the optimal bandwidth scales as $n^{-1/(d+4)}$. The corresponding risk decays slowly as dimension grows. Informally, a sample size exponential in $d$ is needed to obtain the same local resolution. This is a familiar curse of dimensionality, but the density-evolution view sharpens it: at very fine scales, the path is dominated by nearest-neighbor geometry; at very coarse scales, it may be nearly Gaussian or nearly flat; useful intermediate structure may be narrow or absent.

Recent work studies KDE when $n$ and $d$ grow together with
$(\log n)/d$ fixed, identifying distinct bandwidth regimes that include a
classical central-limit regime, a heavy-tailed regime, and an
extreme-value regime in which only a few data points dominate the estimate
\citep{biroli_2026_KernelDensityEstimators}. This provides a modern explanation of a phenomenon familiar to practitioners: as bandwidth decreases in high dimension, a KDE can
transition abruptly from a smooth estimate to a collection of isolated
peaks.

\subsection{Intrinsic dimension and representation spaces}
\label{subsec:intrinsic}

High-dimensional data may lie near a low-dimensional manifold or structured subset. In such cases, density evolution in the ambient space may be misleading. Heat flow in $\R^d$ smooths perpendicular to the manifold and can obscure intrinsic geometry. Alternatives include manifold-adapted kernels, graph diffusion, local covariance adaptation, and density evolution after representation learning.

This creates a statistical dilemma. If the representation is learned from the same data, uncertainty in the representation should propagate to uncertainty in the density path. If the representation is fixed, the density path may be easier to analyze but may depend strongly on preprocessing. A useful review agenda would be to connect density evolution with intrinsic dimension estimation, manifold learning, and graph-based diffusion.

\subsection{Sliced and projected density evolution}
\label{subsec:sliced}

One pragmatic approach is to study many low-dimensional projections. For a direction $u\in S^{d-1}$, define projected data $Y_i=u^T X_i$ and a one-dimensional density path
\[
  \fhat_{h,u}(y)=\frac1n\sum_{i=1}^n K_h(y-u^T X_i).
\]
One can then aggregate modal persistence, tail behavior, or two-sample differences over random or optimized directions. Projection pursuit density estimation and sliced optimal transport suggest related strategies. The benefit is interpretability and one-dimensional scale-space theory; the cost is that high-dimensional features may be missed if they are not visible in the chosen projections.

%----------------------------------------------------------------------------
\section{Modern generative diffusion as learned density evolution}
\label{sec:generative-diffusion}

The density-evolution perspective also clarifies the connection between classical smoothing and modern generative diffusion. In score-based diffusion models, a forward stochastic process gradually transforms the data distribution into a simple prior, and a learned reverse-time process transforms the prior back into data \citep{sohl-dickstein_2015_DeepUnsupervisedLearning,song_2019_GenerativeModelingEstimating,ho_2020_DenoisingDiffusionProbabilistic,song_2020_ScoreBasedGenerativeModeling}.

Consider a forward stochastic differential equation (SDE)
\begin{equation*}
  \dd X_t=b(X_t,t)\dd t+\sigma(t)\dd W_t,
\end{equation*}
with marginal density $p_t$. The marginals solve a Fokker--Planck equation of the form \eqref{eq:fokker-planck}. Under suitable conditions, the reverse-time dynamics are again a diffusion whose drift depends on the score $\nabla\log p_t$ \citep{anderson_1982_ReversetimeDiffusionEquation,song_2020_ScoreBasedGenerativeModeling}. In a common simplified notation, the reverse drift contains the term
\begin{equation*}
  b(x,t)-\sigma(t)^2\nabla\log p_t(x).
\end{equation*}
Thus generation requires learning the score field of the evolving density.

Classical Gaussian KDE also has a score field. By \eqref{eq:gaussian-mean-shift-score}, the KDE score is proportional to a mean-shift vector. This formal connection does not make KDE a modern diffusion model, but it highlights a shared object: the gradient of the log density along a smoothing/noising path.

\subsection{Diagnostics from density evolution}
\label{subsec:generative-diagnostics}

Generative models are usually evaluated by sample quality, likelihood surrogates, feature-space distances, or downstream tasks. Density evolution suggests another class of diagnostics. Given real samples and generated samples, compute comparable density-evolution summaries in a representation space:
\[
  \{\fhat_t^{\mathrm{real}}\}_{t\ge0},
  \qquad
  \{\fhat_t^{\mathrm{gen}}\}_{t\ge0}.
\]
Then compare mode trees, cluster trees, ridge structures, topological persistence, or information-dissipation curves. Such diagnostics could detect mode dropping, spurious modes, loss of rare subpopulations, or incorrect multiscale geometry. This idea is exploratory, but it gives a concrete statistical bridge between classical density analysis and modern generative modeling.

%----------------------------------------------------------------------------
\section{Mathematical scope and limitations}
\label{sec:recommendations}

A density-evolution argument should make four structural choices explicit.
\begin{enumerate}
\item The evolution operator and its population target must be defined. A feature of $\fhat_t$ usually concerns the corresponding smoothed target $f_t$, not an unsmoothed density.
\item The index and its ordering must be specified. Bandwidth, heat time, density level, component count, and noise time have different meanings.
\item The feature map must be separated from the path. Modes, mixture components, connected regions, ridges, and homology classes are not interchangeable.
\item Any relation between indices requires explicit maps. A family indexed by two parameters is not automatically a bifiltration or a persistence module.
\end{enumerate}

Gaussian scale space has semigroup and one-dimensional causality properties. These do not extend automatically to other kernels, multivariate modal counts, or unrelated complexity parameters. Near shoulders and mergers, feature identities and global continuation remain difficult. Cross-bandwidth regions lack generic inclusion maps.

The formal results in this article are correspondingly local or conditional. The mode-motion equation applies away from degenerate critical points. The OU characterization holds within a stated class of linear Gaussian semigroups. The log-concavity certificate is sufficient rather than necessary. None of these results supplies inferential calibration for an observed feature path.

%----------------------------------------------------------------------------
\section{Open problems}
\label{sec:open-problems}

The density-evolution perspective raises several research questions with statistical and computational implications.

\smallskip 
\noindent \textbf{What should be the canonical evolution?} Heat flow is canonical for Gaussian KDE. It satisfies semigroup and smoothing properties, but it does not converge to a fitted Gaussian. The OU path in \eqref{eq:ou-gaussianization} has a natural Gaussian endpoint, but its inferential properties are largely unexplored. Mixture reduction gives a data-adaptive path, but it is algorithm-dependent.

A canonical density evolution might be required to satisfy positivity, mass preservation, affine equivariance, stability to sampling perturbations, semigroup structure, monotone information loss, and interpretable endpoints. It is unlikely that all desirable properties can hold simultaneously. Understanding the tradeoffs is an open problem.

\smallskip 
\noindent \textbf{Can multivariate scale space avoid artificial features?} One-dimensional Gaussian smoothing has a strong no-new-modes intuition. Multivariate smoothing does not have an equally simple guarantee. Can one design multivariate density evolutions for which modes, ridges, or topological features simplify monotonically? If not for all densities, can this be done for useful classes such as log-concave mixtures, elliptical mixtures, or manifold-supported distributions?

\smallskip 
\noindent \textbf{How should feature-existence intervals be inferred?} Observed branch intervals are useful but coordinate-dependent. Their endpoints can be censored by the declared range and unstable near degeneracies. Bootstrap recurrence alone does not give coverage. Valid procedures must define matching, unmatched events, selection, and the population target. Confidence regions for fixed-bandwidth persistence diagrams provide one route, while analogous theory for mode trees and ridges remains less developed.

\smallskip 
\noindent \textbf{What maps should connect topology across bandwidths?} The collection $\{L_{h,\lambda}\}$ has a valid density-level filtration for each fixed $h$, but no generic inclusions across $h$. Useful theory must supply coherent correspondences or explicit maps before invoking vineyards, zigzags, or multiparameter persistence. Statistical summaries must also separate sampling error from numerical discretization.

\smallskip 
\noindent \textbf{How should two density evolutions be compared?} Given two samples, define paths $\fhat_t^X$ and $\fhat_t^Y$. Possible distances include
\[
  \int_0^T \|\fhat_t^X-\fhat_t^Y\|_2^2\dd t,
  \qquad
  \int_0^T W_2^2(\fhat_t^X,\fhat_t^Y)\dd t,
\]
or distances between mode trees, cluster trees, or persistence diagrams. Such distances could lead to multiscale two-sample tests that identify the scales at which distributions differ.

\smallskip 
\noindent \textbf{What survives in high dimension?} In high dimension, density values are hard to estimate. It may be more realistic to estimate relative density, score fields, ridges, nearest-neighbor density ranks, or projected density evolutions. A major question is which density-evolution summaries are statistically meaningful when $d$ grows with $n$.

\smallskip 
\noindent \textbf{Can density evolution audit generative models?} Generative models can produce visually plausible samples while missing rare modes or altering multiscale structure. Density-evolution diagnostics could compare real and generated samples through mode trees, cluster trees, topological summaries, or score fields in representation space. Developing statistically calibrated versions of these diagnostics is an open problem.

\section{Conclusion}
\label{sec:conclusion}

Density estimation is often organized around estimator choice. The density-evolution perspective instead studies how explicit operators move a probability measure through a family of densities. Such a path is meaningful only after its target, index, ordering, and feature map have been fixed.

This viewpoint depends on classical density estimation rather than replacing it. Gaussian KDE is heat flow from the empirical measure. Mode trees and SiZer summarize a scale-space surface. Mixture reduction defines a separate complexity path. Cluster trees and persistent homology describe density-level filtrations, while score-based models use forward and reverse noise evolutions.

Three structural results make part of this organization precise. Nondegenerate critical points follow local differentiable branches, with velocity determined by the Hessian and the path derivative. Moment preservation and the semigroup property characterize the OU Gaussianization path within the stated linear Gaussian class. A covariance condition yields explicit terminal log-concavity bounds for shared-covariance mixtures, including heat and OU paths.

The framework also identifies limits on transfer between these constructions. A fixed-level Gaussian superlevel set need not be nested across bandwidth, and a collection of filtrations is not automatically a multiparameter persistence module. Descriptive feature duration is not inferential significance. These distinctions leave substantial open theory for global branch continuation, calibrated pathwise inference, high-dimensional structure, and generative-model diagnostics.
\begin{appendix}
\section{Proof of Theorem~\ref{thm:mode-motion}}\label{app:proof1}
\begin{proof}
Let
\[
  F(x,t)=\nabla_x f(x,t).
\]
By assumption, \(F\) is continuously differentiable in \((x,t)\). At
\((x_0,t_0)\),
\[
  F(x_0,t_0)=0,
  \qquad
  D_xF(x_0,t_0)=\nabla_x^2 f(x_0,t_0)=H_0,
\]
and \(H_0\) is nonsingular. The implicit function theorem therefore gives
neighborhoods \(V\) of \(x_0\) and \(J\) of \(t_0\), together with a unique
continuously differentiable map \(m:J\to V\), such that
\[
  m(t_0)=x_0,
  \qquad
  F(m(t),t)=0
  \quad (t\in J).
\]
If \(t_0\) is an endpoint of the interval \(I\), the same argument gives a
relative one-sided neighborhood \(J\subset I\).

Differentiating the identity \(F(m(t),t)=0\) with respect to \(t\) gives
\[
  D_xF(m(t),t)\dot m(t)+\partial_tF(m(t),t)=0.
\]
Since
\[
  D_xF(m(t),t)=\nabla_x^2 f(m(t),t),
\]
we obtain
\[
  \dot m(t)
  =
  -\{\nabla_x^2 f(m(t),t)\}^{-1}\partial_t\nabla_x f(m(t),t).
\]
If mixed derivatives commute, that is,
\[
  \partial_t\nabla_x f=\nabla_x(\partial_t f),
\]
then
\[
  \dot m(t)
  =
  -\{\nabla_x^2 f(m(t),t)\}^{-1}
   \nabla_x\{\partial_t f(m(t),t)\}.
\]
For heat flow, \(\partial_t f_t=(1/2)\Delta f_t\), so this becomes
\[
  \dot m(t)
  =
  -\frac12\{\nabla_x^2 f_t(m(t))\}^{-1}
  \nabla_x\Delta f_t(m(t)).
\]

It remains to justify the final statement concerning local constancy of
the number and Morse index of critical points. The Morse index is the number
of negative Hessian eigenvalues. Fix a compact set
\(K\subset U\) and a time \(t_0\). Suppose that no critical point of
\(f(\cdot,t_0)\) lies on \(\partial K\), and that every critical point in
\(K\) is nondegenerate. Nondegenerate critical points are isolated. Since
the critical set in \(K\) is closed and \(K\) is compact, there are only
finitely many such critical points, say \(x_1,\ldots,x_r\), in \(K\).

For each \(x_j\), the argument above gives a unique local branch
\(m_j(t)\) of critical points near \(t_0\). Choose disjoint neighborhoods
\(V_j\) of the \(x_j\)'s whose closures are contained in the interior of
\(K\). On the compact set
\[
  K\setminus \bigcup_{j=1}^r V_j,
\]
the continuous function \(\|F(x,t_0)\|\) is bounded away from zero.
By continuity of \(F\), after shrinking the time interval around \(t_0\),
there are no critical points in this complement. Hence all critical
points in \(K\) for nearby times are exactly those lying on the branches
\(m_1(t),\ldots,m_r(t)\).

Finally, the eigenvalues of the Hessian
\[
  \nabla_x^2 f(m_j(t),t)
\]
vary continuously with \(t\). Since the Hessian at \(t_0\) is nonsingular,
no eigenvalue crosses zero for \(t\) sufficiently close to \(t_0\). Thus
the Morse index, and in particular the property of being a local mode
when the Hessian is negative definite, is locally constant along each
branch. This proves the result.
\end{proof}

\section{Proof of Theorem~\ref{thm:ou-characterization}}\label{app:proof2}
\begin{proof}
Let \(P\in\mathcal P_3^+\), and write \(m=m_P\) and \(S=S_P\). By the
definition of \(T_t\),
\[
  T_tP
  =
  \mathcal L\{m+a(t)(X-m)+b(t)Z_P\},
\]
where \(X\sim P\), \(Z_P\sim \Normal(0,S)\), and \(X\) and \(Z_P\) are
independent.

First consider the covariance. Since \(X-m\) and \(Z_P\) are independent
and centered,
\[
  \Cov(T_tP)
  =
  a(t)^2 S+b(t)^2S
  =
  \{a(t)^2+b(t)^2\}S.
\]
By the assumed covariance preservation, \(\Cov(T_tP)=S\) for every
positive definite \(S\). Hence
\[
  a(t)^2+b(t)^2=1
  \qquad (t\ge0).
\]
Since \(b(t)\ge0\), this already implies
\[
  b(t)=\sqrt{1-a(t)^2}.
\]

We next use the semigroup property to identify \(a(t)\). Let
\[
  Y_t=m+a(t)(X-m)+b(t)Z_1,
\]
where \(Z_1\sim\Normal(0,S)\) is independent of \(X\). Since \(T_t\)
preserves mean and covariance, \(Y_t\) has mean \(m\) and covariance \(S\).
Applying \(T_s\) to the law of \(Y_t\) gives
\[
  m+a(s)(Y_t-m)+b(s)Z_2,
\]
where \(Z_2\sim\Normal(0,S)\) is independent of \(Y_t\). Thus
\[
  T_s(T_tP)
  =
  \mathcal L\{m+a(s)a(t)(X-m)+a(s)b(t)Z_1+b(s)Z_2\}.
\]
The Gaussian term
\[
  a(s)b(t)Z_1+b(s)Z_2
\]
is independent of \(X\) and has distribution
\[
  \Normal\bigl(0,\{a(s)^2b(t)^2+b(s)^2\}S\bigr).
\]

On the other hand,
\[
  T_{s+t}P
  =
  \mathcal L\{m+a(s+t)(X-m)+b(s+t)Z_3\},
\]
where \(Z_3\sim\Normal(0,S)\) is independent of \(X\). The semigroup
property says that these two laws are equal for every \(P\in\mathcal P_3^+\).

To identify the coefficient multiplying \(X-m\), compare centered third
moments. For a distribution \(Q\) with finite third moment, write
\[
  M_3(Q)=\E_Q\{(X-m_Q)^{\otimes 3}\}
\]
for its third central moment tensor. Centered Gaussian random variables
have zero third central moment, and all mixed third-order terms vanish
because the Gaussian noises are centered and independent of \(X\).
Therefore
\[
  M_3(T_{s+t}P)=a(s+t)^3 M_3(P),
\]
whereas
\[
  M_3(T_s(T_tP))=\{a(s)a(t)\}^3M_3(P).
\]
Because the semigroup identity holds for every \(P\in\mathcal P_3^+\), we
may choose a \(P\) with positive definite covariance and nonzero third
central moment tensor. For example, take one non-Gaussian coordinate with
nonzero third central moment and add independent Gaussian coordinates to
make the covariance positive definite. For such a \(P\),
\[
  a(s+t)^3=\{a(s)a(t)\}^3.
\]
Since \(0\le a(t)\le1\), it follows that
\[
  a(s+t)=a(s)a(t)
  \qquad (s,t\ge0).
\]

Thus \(a\) is a continuous multiplicative function on \([0,\infty)\) with
\(a(0)=1\). Moreover, \(a(t)>0\) for every \(t\ge0\). Indeed, if
\(a(t_0)=0\) for some \(t_0>0\), then
\[
  0=a(t_0)=a(t_0/2)^2,
\]
so \(a(t_0/2)=0\). Iterating gives \(a(t_0/2^k)=0\) for all \(k\), and
continuity at \(0\) would imply \(a(0)=0\), contradicting \(a(0)=1\).

Since \(a(t)>0\), define $\alpha(t)=-\log a(t).$ Then \(\alpha\) is continuous and additive:
\[
  \alpha(s+t)=\alpha(s)+\alpha(t).
\]
The continuous additive functions on \([0,\infty)\) are linear, so $
  \alpha(t)=ct$
for some \(c\ge0\). Hence
\[
  a(t)=e^{-ct}.
\]
The family is assumed not to be the identity, so \(a(t)<1\) for at least
one \(t>0\). Therefore \(c>0\). Finally,
\[
  b(t)=\sqrt{1-a(t)^2}=\sqrt{1-e^{-2ct}}.
\]
This proves the claimed characterization.
\end{proof}

\section{Proof of Theorem~\ref{thm:terminal-logconcavity}}\label{app:proof3}
\begin{proof}
Let
\[
  r_i(x)=\pi_i\varphi_\Sigma(x-\mu_i),
  \qquad
  f(x)=\sum_{i=1}^n r_i(x),
\]
so that
\[
  w_i(x)=\frac{r_i(x)}{f(x)}.
\]
For a Gaussian density with covariance \(\Sigma\),
\[
  \nabla_x \log \varphi_\Sigma(x-\mu_i)
  =
  \Sigma^{-1}(\mu_i-x).
\]
Therefore
\[
  \nabla f(x)
  =
  \sum_i r_i(x)\Sigma^{-1}(\mu_i-x)
  =
  f(x)\Sigma^{-1}\{\bar\mu_w(x)-x\},
\]
and hence
\[
  \nabla\log f(x)
  =
  \Sigma^{-1}\{\bar\mu_w(x)-x\}.
\]

We now differentiate this expression. For a direction \(v\in\R^d\),
\[
  D_v\log r_i(x)
  =
  v^T\Sigma^{-1}(\mu_i-x).
\]
Since \(w_i=r_i/f\),
\[
  D_v w_i(x)
  =
  w_i(x)
  \left[
    v^T\Sigma^{-1}(\mu_i-x)
    -
    v^T\Sigma^{-1}\{\bar\mu_w(x)-x\}
  \right],
\]
and therefore
\[
  D_v w_i(x)
  =
  w_i(x)v^T\Sigma^{-1}\{\mu_i-\bar\mu_w(x)\}.
\]
It follows that
\begin{align*}
  D_v\bar\mu_w(x)
  &=
  \sum_i \mu_i\,D_vw_i(x)  \\
  &=
  \sum_i w_i(x)\mu_i
  v^T\Sigma^{-1}\{\mu_i-\bar\mu_w(x)\}.
\end{align*}
Since
\[
  \sum_i w_i(x)\{\mu_i-\bar\mu_w(x)\}=0,
\]
we may subtract \(\bar\mu_w(x)\) from \(\mu_i\) inside the preceding sum,
which gives
\[
  D_v\bar\mu_w(x)
  =
  C_w(x)\Sigma^{-1}v.
\]
Thus the Jacobian of \(x\mapsto\bar\mu_w(x)\) is
\[
  D\bar\mu_w(x)=C_w(x)\Sigma^{-1}.
\]
Differentiating
\[
  \nabla\log f(x)=\Sigma^{-1}\{\bar\mu_w(x)-x\}
\]
therefore yields
\[
  \nabla^2\log f(x)
  =
  \Sigma^{-1}C_w(x)\Sigma^{-1}-\Sigma^{-1}.
\]
This is the stated Hessian identity.

If \(C_w(x)\preceq \Sigma\) for every \(x\), then
\[
  \nabla^2\log f(x)
  =
  \Sigma^{-1}\{C_w(x)-\Sigma\}\Sigma^{-1}
  \preceq 0
\]
for every \(x\), and hence \(\log f\) is concave. Thus \(f\) is
log-concave. If \(C_w(x)\prec\Sigma\) for every \(x\), the Hessian is
negative definite for every \(x\), and \(f\) is strictly log-concave.

It remains to prove the ellipsoid condition. Suppose that for some
\(c\in\R^d\),
\[
  \rho^2:=\max_i\|\Sigma^{-1/2}(\mu_i-c)\|^2<1.
\]
Let \(v\neq0\). Then
\[
  v^TC_w(x)v
  =
  \sum_i w_i(x)
  \{v^T(\mu_i-\bar\mu_w(x))\}^2
  =
  \Var_w\{v^T\mu_i\}.
\]
Using the elementary inequality
\[
  \Var_w\{v^T\mu_i\}
  \le
  \E_w\{v^T(\mu_i-c)\}^2,
\]
we obtain
\[
  v^TC_w(x)v
  \le
  \sum_i w_i(x)\{v^T(\mu_i-c)\}^2.
\]
Write
\[
  y_i=\Sigma^{-1/2}(\mu_i-c),
  \qquad
  q=\Sigma^{1/2}v.
\]
Then
\[
  v^T(\mu_i-c)=q^Ty_i,
\]
so by Cauchy--Schwarz,
\[
  \{v^T(\mu_i-c)\}^2
  =
  (q^Ty_i)^2
  \le
  \|q\|^2\|y_i\|^2
  \le
  \rho^2\|q\|^2.
\]
Therefore
\[
  v^TC_w(x)v
  \le
  \rho^2\|q\|^2
  =
  \rho^2 v^T\Sigma v
  <
  v^T\Sigma v.
\]
Since this holds for every \(v\neq0\), \(C_w(x)\prec\Sigma\) for every
\(x\). Hence \(f\) is strictly log-concave. This criterion is now applied to the two density evolutions in the theorem.

First consider the heat-flow KDE
\[
  \widehat u_t(x)=\frac1n\sum_{i=1}^n\varphi_{tI_d}(x-X_i).
\]
Here the common covariance is \(\Sigma=tI_d\), the component means are
\(\mu_i=X_i\), and the weights are equal. If
\[
  \max_i\|X_i-c\|^2\le R^2,
\]
then
\[
  \max_i\|\Sigma^{-1/2}(\mu_i-c)\|^2
  =
  \max_i\|(tI_d)^{-1/2}(X_i-c)\|^2
  \le
  \frac{R^2}{t}.
\]
Thus the strict ellipsoid condition holds whenever \(t>R^2\). Hence
\(\widehat u_t\) is strictly log-concave for all \(t>R^2\).

Next consider the OU path
\[
  g_t(x)
  =
  \frac1n\sum_{i=1}^n
  \Normal\left(
    x\mid \bar X+e^{-t}(X_i-\bar X),
    (1-e^{-2t})S
  \right).
\]
For \(t>0\), this is a shared-covariance Gaussian mixture with
\[
  \Sigma_t=(1-e^{-2t})S,
  \qquad
  \mu_i(t)=\bar X+e^{-t}(X_i-\bar X).
\]
Choose \(c=\bar X\). Define $d_i = X_i - \bar{X}$, then 
\begin{align*}
  \|\Sigma_t^{-1/2}\{\mu_i(t)-\bar X\}\|^2
  &=
  e^{-2t}d_i^T\{(1-e^{-2t})S\}^{-1}d_i\\
  &=
  \frac{e^{-2t}}{1-e^{-2t}}
  d_i^TS^{-1}d_i.
\end{align*}
Therefore the ellipsoid condition is
\[
  \frac{e^{-2t}}{1-e^{-2t}}R_S^2<1,
\]
where
\[
  R_S^2=\max_i~ d_i^T S^{-1} d_i.
\]
This inequality is equivalent to
\[
  e^{-2t}(1+R_S^2)<1,
\]
or
\[
  t>\frac12\log(1+R_S^2).
\]
Hence \(g_t\) is strictly log-concave whenever this condition holds.

Finally, a strictly log-concave continuous density on \(\R^d\) that tends
to zero at infinity has a unique global maximizer. Indeed, if two distinct
points were both global maximizers, strict concavity of \(\log f\) would
make every point in the open line segment between them have strictly larger
log density, a contradiction. Its positive-level superlevel sets are convex because, for every
\(\lambda>0\),
\[
  \{x:f(x)\ge \lambda\}
  =
  \{x:\log f(x)\ge \log \lambda\}
\]
is a superlevel set of a concave function. The case \(\lambda\le0\) is
trivial for a positive density. This
proves the theorem.
\end{proof}

\clearpage
\section{An elementary failure of cross-bandwidth nesting}
\label{app:cross-bandwidth}

Let $P$ be a point mass at zero on the real line. Its Gaussian-smoothed density is
\[
  f_h(x)=\frac{1}{\sqrt{2\pi}h}
  \exp\left\{-\frac{x^2}{2h^2}\right\}.
\]
For fixed $\lambda>0$, set $H=(\sqrt{2\pi}\lambda)^{-1}$. When $0<h<H$, the superlevel set is $[-r(h),r(h)]$, where
\[
  r(h)^2=2h^2\log(H/h),
  \qquad
  \frac{\dd}{\dd h}r(h)^2=2h\{2\log(H/h)-1\}.
\]
The radius increases until $h=H/\sqrt{\mathrm e}$, then decreases to zero at $h=H$. For $h>H$, the set is empty. Thus even this fixed-level Gaussian superlevel set is not monotone in bandwidth. The example demonstrates the missing inclusion property. It does not claim that every region path has the same behavior.

For probability content $\alpha$, the thresholded region satisfies
\[
  R_{h,\alpha}=\{x:f_h(x)\geq c_{h,\alpha}\},
  \qquad
  \int_{R_{h,\alpha}}f_h(x)\dd x=\alpha,
\]
when such a threshold exists. Equal content improves cross-bandwidth comparison but does not generally provide filtration maps. Level-set, cluster-tree, and persistence procedures must retain their own targets and perturbation metrics \citep{polonik_1995_MeasuringMassConcentrations,tsybakov_1997_NonparametricEstimationDensity,rigollet_2009_OptimalRatesPlugin,chaudhuri_2010_RatesConvergenceCluster}.

\clearpage
\section{Review scope}
\label{app:review-scope}

This is a scoped narrative review. Source discovery began with foundational references and used targeted searches for kernel estimation, Gaussian scale space, critical bandwidths, modality tests, mode trees, mode forests, SiZer, mixtures, density level sets, cluster trees, persistence, diffusion, and score-based generation. Searches were updated on September 2, 2026. Inclusion required a direct role in defining a target, supporting a claim, specifying a method, or documenting a failure mode used in the article.

\end{appendix}

%%%%%%%%%%%%%%%%%%%%%%%%%%%%%%%%%%%%%%%%%%%%%%%%%%%%%%%%%%%%%
%%                  The Bibliography                       %%
%%                                                         %%
%%  imsart-???.bst  will be used to                        %%
%%  create a .BBL file for submission.                     %%
%%                                                         %%
%%  Note that the displayed Bibliography will not          %%
%%  necessarily be rendered by Latex exactly as specified  %%
%%  in the online Instructions for Authors.                %%
%%                                                         %%
%%  MR numbers will be added by VTeX.                      %%
%%                                                         %%
%%  Use \cite{...} to cite references in text.             %%
%%                                                         %%
%%%%%%%%%%%%%%%%%%%%%%%%%%%%%%%%%%%%%%%%%%%%%%%%%%%%%%%%%%%%%

%% if your bibliography is in bibtex format, uncomment commands:
\bibliographystyle{imsart-number} % Style BST file (imsart-number.bst or imsart-nameyear.bst)
\bibliography{references}       % Bibliography file (usually '*.bib')

\end{document}

%% file: tables/method-comparison.tex
\begin{table}[!htbp]
\centering
\caption{Targets, evidence, and limitations of scale-space summaries.}
\label{tab:method-comparison}
\begin{tabularx}{\textwidth}{@{}>{\raggedright\arraybackslash}p{0.23\textwidth}>{\raggedright\arraybackslash}p{0.25\textwidth}>{\raggedright\arraybackslash}X@{}}
\toprule
Summary & Target and evidence & Supported claim and limitation \\
\midrule
Mode tree & Estimated modes over bandwidth. Descriptive. & A branch exists over the observed range. Its length is not significance, and boundary endpoints are censored. \\
Mode forest & Bootstrap modal occupancy in location--bandwidth pixels. Stability. & A mode falls in a pixel with the reported frequency. Complete branches are not matched, and no coverage is asserted. \\
Critical bandwidth and modal test & Empirical modal restriction or a calibrated population null. & The critical bandwidth is descriptive until calibrated. The ACR test concerns exactly $k$ versus more modes. Nonrejection gives neither a count nor an upper bound. \\
SiZer & Smoothed derivative signs. Finite-grid inference. & Jointly calibrated signs support increasing or decreasing regions. Unsmoothed targets, continua, and data-selected ranges need further justification. \\
Cluster tree or content region & Connected upper-level sets at fixed bandwidth. & Components are nested in density level, not generally in bandwidth. Inference requires an additional procedure. \\
Calibrated zero-dimensional persistence & Component lifetimes along a fixed-bandwidth filtration. & Lifetimes exceed a sampling threshold on the declared numerical grid. This does not cover the entire bandwidth path or discretization error. \\
\bottomrule
\end{tabularx}
\par{\raggedright\noindent Notes: ACR denotes the Ameijeiras-Alonso--Crujeiras--Rodr\'iguez-Casal modality test. A claim is conditional on the selected procedure's assumptions.\par}
\end{table}

%% file: references.bib
@string{jan = "January"}

@string{mar = "March"}

@string{apr = "April"}

@string{may = "May"}

@string{jun = "June"}

@string{jul = "July"}

@string{aug = "August"}

@string{sep = "September"}

@string{oct = "October"}

@string{nov = "November"}

@string{dec = "December"}

@inproceedings{witkin_1983_ScalespaceFiltering,
	address = {Los Altos, CA},
	series = {{IJCAI}'83},
	title = {Scale-Space Filtering},
	booktitle = {Proceedings of the Eighth International Joint Conference on Artificial Intelligence},
	volume = {2},
	editor = {Bundy, Alan},
	publisher = {William Kaufmann},
	author = {Witkin, Andrew P.},
	year = {1983},
	pages = {1019--1022},
	url = {https://www.ijcai.org/Proceedings/83-2/Papers/091.pdf},
}

@article{wasserman_2018_TopologicalDataAnalysis,
	title = {Topological {Data} {Analysis}},
	volume = {5},
	issn = {2326-8298, 2326-831X},
	doi = {10.1146/annurev-statistics-031017-100045},
	language = {en},
	number = {1},
	urldate = {2026-05-28},
	journal = {Annual Review of Statistics and Its Application},
	author = {Wasserman, Larry},
	month = mar,
	year = {2018},
	pages = {501--532},
}

@book{wand_1994_KernelSmoothing,
	address = {London},
	title = {Kernel {Smoothing}},
	isbn = {978-0-429-17059-1},
	doi = {10.1201/b14876},
	language = {en},
	urldate = {2026-05-28},
	publisher = {Chapman \& Hall},
	author = {Wand, M. P. and Jones, M. C.},
	year = {1995},
}

@article{tsybakov_1997_NonparametricEstimationDensity,
	title = {On Nonparametric Estimation of Density Level Sets},
	pages = {948--969},
	volume = {25},
	issn = {0090-5364},
	doi = {10.1214/aos/1069362732},
	number = {3},
	urldate = {2026-05-28},
	journal = {The Annals of Statistics},
	author = {Tsybakov, A. B.},
	month = jun,
	year = {1997},
}

@article{terrell_1992_VariableKernelDensity,
	title = {Variable {Kernel} {Density} {Estimation}},
	volume = {20},
	issn = {0090-5364},
	doi = {10.1214/aos/1176348768},
	number = {3},
	urldate = {2026-05-28},
	journal = {The Annals of Statistics},
	author = {Terrell, George R. and Scott, David W.},
	month = sep,
	year = {1992},
}

@article{stuetzle_2010_GeneralizedSingleLinkage,
	title = {A {Generalized} {Single} {Linkage} {Method} for {Estimating} the {Cluster} {Tree} of a {Density}},
	volume = {19},
	issn = {1061-8600, 1537-2715},
	doi = {10.1198/jcgs.2009.07049},
	language = {en},
	number = {2},
	urldate = {2026-05-28},
	journal = {Journal of Computational and Graphical Statistics},
	author = {Stuetzle, Werner and Nugent, Rebecca},
	month = jan,
	year = {2010},
	pages = {397--418},
}

@article{stuetzle_2003_EstimatingClusterTree,
	title = {Estimating the {Cluster} {Tree} of a {Density} by {Analyzing} the {Minimal} {Spanning} {Tree} of a {Sample}},
	volume = {20},
	copyright = {http://www.springer.com/tdm},
	issn = {0176-4268, 1432-1343},
	doi = {10.1007/s00357-003-0004-6},
	number = {1},
	urldate = {2026-05-28},
	journal = {Journal of Classification},
	author = {Stuetzle, Werner},
	month = may,
	year = {2003},
	pages = {25--47},
}

@article{steinwart_2015_FullyAdaptiveDensitybased,
	title = {Fully adaptive density-based clustering},
	volume = {43},
	issn = {0090-5364},
	url = {https://projecteuclid.org/journals/annals-of-statistics/volume-43/issue-5/Fully-adaptive-density-based-clustering/10.1214/15-AOS1331.full},
	doi = {10.1214/15-AOS1331},
	number = {5},
	urldate = {2026-05-28},
	journal = {The Annals of Statistics},
	author = {Steinwart, Ingo},
	month = oct,
	year = {2015},
}

@article{stam_1959_InequalitiesSatisfiedQuantities,
	title = {Some inequalities satisfied by the quantities of information of {Fisher} and {Shannon}},
	volume = {2},
	issn = {00199958},
	doi = {10.1016/S0019-9958(59)90348-1},
	language = {en},
	number = {2},
	urldate = {2026-05-28},
	journal = {Information and Control},
	author = {Stam, A.J.},
	month = jun,
	year = {1959},
	pages = {101--112},
}

@misc{song_2020_ScoreBasedGenerativeModeling,
	title = {Score-{Based} {Generative} {Modeling} through {Stochastic} {Differential} {Equations}},
	copyright = {arXiv.org perpetual, non-exclusive license},
	url = {https://arxiv.org/abs/2011.13456},
	doi = {10.48550/ARXIV.2011.13456},
	urldate = {2026-05-28},
	publisher = {arXiv},
	author = {Song, Yang and Sohl-Dickstein, Jascha and Kingma, Diederik P. and Kumar, Abhishek and Ermon, Stefano and Poole, Ben},
	year = {2020},
}

@incollection{song_2019_GenerativeModelingEstimating,
	address = {Red Hook, NY, USA},
	title = {Generative modeling by estimating gradients of the data distribution},
	booktitle = {Proceedings of the 33rd {International} {Conference} on {Neural} {Information} {Processing} {Systems}},
	publisher = {Curran Associates Inc.},
	author = {Song, Yang and Ermon, Stefano},
	year = {2019},
}

@inproceedings{sohl-dickstein_2015_DeepUnsupervisedLearning,
	address = {Lille, France},
	series = {Proceedings of {Machine} {Learning} {Research}},
	title = {Deep {Unsupervised} {Learning} using {Nonequilibrium} {Thermodynamics}},
	volume = {37},
	url = {https://proceedings.mlr.press/v37/sohl-dickstein15.html},
	booktitle = {Proceedings of the 32nd {International} {Conference} on {Machine} {Learning}},
	publisher = {PMLR},
	author = {Sohl-Dickstein, Jascha and Weiss, Eric and Maheswaranathan, Niru and Ganguli, Surya},
	editor = {Bach, Francis and Blei, David},
	month = jul,
	year = {2015},
	pages = {2256--2265},
}

@article{silverman_1981_UsingKernelDensity,
	title = {Using {Kernel} {Density} {Estimates} to {Investigate} {Multimodality}},
	volume = {43},
	issn = {1369-7412, 1467-9868},
	doi = {10.1111/j.2517-6161.1981.tb01155.x},
	language = {en},
	number = {1},
	urldate = {2026-05-28},
	journal = {Journal of the Royal Statistical Society Series B: Statistical Methodology},
	author = {Silverman, B. W.},
	month = sep,
	year = {1981},
	pages = {97--99},
}

@article{scott_2001_KernelsMixtures,
	title = {From {Kernels} to {Mixtures}},
	volume = {43},
	issn = {0040-1706, 1537-2723},
	url = {http://www.tandfonline.com/doi/abs/10.1198/004017001316975916},
	doi = {10.1198/004017001316975916},
	language = {en},
	number = {3},
	urldate = {2026-05-28},
	journal = {Technometrics},
	author = {Scott, David W and Szewczyk, William F},
	month = aug,
	year = {2001},
	pages = {323--335},
}

@book{scott_2015_MultivariateDensityEstimation,
	edition = {2nd},
	address = {Hoboken, NJ},
	series = {Wiley {Series} in {Probability} and {Statistics}},
	title = {Multivariate {Density} {Estimation}: {Theory}, {Practice}, and {Visualization}},
	isbn = {978-0-471-69755-8 978-1-118-57557-4},
	shorttitle = {Multivariate {Density} {Estimation}},
	doi = {10.1002/9781118575574},
	language = {en},
	urldate = {2026-05-28},
	publisher = {Wiley},
	author = {Scott, David W.},
	month = mar,
	year = {2015},
}

@article{schoenberg_1951_PolyaFrequencyFunctions,
	title = {On {Polya} Frequency Functions: {I}. The Totally Positive Functions and Their {Laplace} Transforms},
	volume = {1},
	issn = {0021-7670, 1565-8538},
	shorttitle = {On {Polya} frequency functions},
	doi = {10.1007/BF02790092},
	language = {en},
	number = {1},
	urldate = {2026-05-28},
	journal = {Journal d'Analyse Mathématique},
	author = {Schoenberg, I. J.},
	month = dec,
	year = {1951},
	pages = {331--374},
}

@article{samworth_2010_AsymptoticsOptimalBandwidth,
	title = {Asymptotics and optimal bandwidth selection for highest density region estimation},
	volume = {38},
	issn = {0090-5364},
	url = {https://projecteuclid.org/journals/annals-of-statistics/volume-38/issue-3/Asymptotics-and-optimal-bandwidth-selection-for-highest-density-region-estimation/10.1214/09-AOS766.full},
	doi = {10.1214/09-AOS766},
	number = {3},
	urldate = {2026-05-28},
	journal = {The Annals of Statistics},
	author = {Samworth, R. J. and Wand, M. P.},
	month = jun,
	year = {2010},
}

@article{rosenblatt_1956_RemarksNonparametricEstimates,
	title = {Remarks on {Some} {Nonparametric} {Estimates} of a {Density} {Function}},
	volume = {27},
	issn = {0003-4851},
	url = {http://projecteuclid.org/euclid.aoms/1177728190},
	doi = {10.1214/aoms/1177728190},
	language = {en},
	number = {3},
	urldate = {2026-05-28},
	journal = {The Annals of Mathematical Statistics},
	author = {Rosenblatt, Murray},
	month = sep,
	year = {1956},
	pages = {832--837},
}

@article{rinaldo_2010_GeneralizedDensityClustering,
	title = {Generalized Density Clustering},
	pages = {2678--2722},
	volume = {38},
	issn = {0090-5364},
	url = {https://projecteuclid.org/journals/annals-of-statistics/volume-38/issue-5/Generalized-density-clustering/10.1214/10-AOS797.full},
	doi = {10.1214/10-AOS797},
	number = {5},
	urldate = {2026-05-28},
	journal = {The Annals of Statistics},
	author = {Rinaldo, Alessandro and Wasserman, Larry},
	month = oct,
	year = {2010},
}

@article{rigollet_2009_OptimalRatesPlugin,
	title = {Optimal Rates for Plug-In Estimators of Density Level Sets},
	pages = {1154--1178},
	volume = {15},
	issn = {1350-7265},
	url = {https://projecteuclid.org/journals/bernoulli/volume-15/issue-4/Optimal-rates-for-plug-in-estimators-of-density-level-sets/10.3150/09-BEJ184.full},
	doi = {10.3150/09-BEJ184},
	number = {4},
	urldate = {2026-05-28},
	journal = {Bernoulli},
	author = {Rigollet, Philippe and Vert, R{\'e}gis},
	month = nov,
	year = {2009},
}

@article{richardson_1997_BayesianAnalysisMixtures,
	title = {On {Bayesian} {Analysis} of {Mixtures} with an {Unknown} {Number} of {Components} (with discussion)},
	volume = {59},
	issn = {1369-7412, 1467-9868},
	doi = {10.1111/1467-9868.00095},
	language = {en},
	number = {4},
	urldate = {2026-05-28},
	journal = {Journal of the Royal Statistical Society Series B: Statistical Methodology},
	author = {Richardson, Sylvia. and Green, Peter J.},
	month = nov,
	year = {1997},
	pages = {731--792},
}

@article{ray_2005_TopographyMultivariateNormal,
	title = {The Topography of Multivariate Normal Mixtures},
	pages = {2042--2065},
	volume = {33},
	issn = {0090-5364},
	url = {https://projecteuclid.org/journals/annals-of-statistics/volume-33/issue-5/The-topography-of-multivariate-normal-mixtures/10.1214/009053605000000417.full},
	doi = {10.1214/009053605000000417},
	number = {5},
	urldate = {2026-05-28},
	journal = {The Annals of Statistics},
	author = {Ray, Surajit and Lindsay, Bruce G.},
	month = oct,
	year = {2005},
}

@article{priebe_2000_AlternatingKernelMixture,
	title = {Alternating kernel and mixture density estimates},
	volume = {35},
	issn = {01679473},
	doi = {10.1016/S0167-9473(00)00003-7},
	language = {en},
	number = {1},
	urldate = {2026-05-28},
	journal = {Computational Statistics \& Data Analysis},
	author = {Priebe, Carey E. and Marchette, David J.},
	month = nov,
	year = {2000},
	pages = {43--65},
}

@article{polonik_1995_MeasuringMassConcentrations,
	title = {Measuring Mass Concentrations and Estimating Density Contour Clusters---An Excess Mass Approach},
	pages = {855--881},
	volume = {23},
	issn = {0090-5364},
	doi = {10.1214/aos/1176324626},
	number = {3},
	urldate = {2026-05-28},
	journal = {The Annals of Statistics},
	author = {Polonik, Wolfgang},
	month = jun,
	year = {1995},
}

@inproceedings{phillips_2015_GeometricInferenceKernel,
	series = {{LIPIcs}},
	title = {Geometric {Inference} on {Kernel} {Density} {Estimates}},
	doi = {10.4230/LIPICS.SOCG.2015.857},
	booktitle = {31st {International} {Symposium} on {Computational} {Geometry}, {SoCG} 2015, {Eindhoven}, {The} {Netherlands}, {June} 22-25, 2015},
	publisher = {Schloss Dagstuhl - Leibniz-Zentrum für Informatik},
	author = {Phillips, Jeff M. and Wang, Bei and Zheng, Yan},
	editor = {Arge, Lars and Pach, János},
	year = {2015},
	pages = {857--871},
}

@article{parzen_1962_EstimationProbabilityDensity,
	title = {On {Estimation} of a {Probability} {Density} {Function} and {Mode}},
	volume = {33},
	issn = {0003-4851},
	doi = {10.1214/aoms/1177704472},
	language = {en},
	number = {3},
	urldate = {2026-05-28},
	journal = {The Annals of Mathematical Statistics},
	author = {Parzen, Emanuel},
	month = sep,
	year = {1962},
	pages = {1065--1076},
}

@article{minnotte_1993_ModeTreeTool,
	title = {The {Mode} {Tree}: {A} {Tool} for {Visualization} of {Nonparametric} {Density} {Features}},
	volume = {2},
	issn = {1061-8600, 1537-2715},
	shorttitle = {The {Mode} {Tree}},
	doi = {10.1080/10618600.1993.10474599},
	language = {en},
	number = {1},
	urldate = {2026-05-28},
	journal = {Journal of Computational and Graphical Statistics},
	author = {Minnotte, Michael C. and Scott, David W.},
	month = mar,
	year = {1993},
	pages = {51--68},
}

@article{minnotte_1998_BumpyRoadModeForest,
	title = {The {Bumpy} {Road} to the {Mode} {Forest}},
	volume = {7},
	number = {2},
	journal = {Journal of Computational and Graphical Statistics},
	author = {Minnotte, Michael C. and Marchette, David J. and Wegman, Edward J.},
	year = {1998},
	pages = {239--251},
	doi = {10.1080/10618600.1998.10474773},
}

@article{minnotte_1997_NonparametricTestingExistence,
	title = {Nonparametric Testing of the Existence of Modes},
	pages = {1646--1660},
	volume = {25},
	issn = {0090-5364},
	doi = {10.1214/aos/1031594735},
	number = {4},
	urldate = {2026-05-28},
	journal = {The Annals of Statistics},
	author = {Minnotte, Michael C.},
	month = aug,
	year = {1997},
}

@book{lindsay_1995_MixtureModelsTheory,
	address = {Hayward, Calif. : Alexandria, Va},
	series = {{NSF}-{CBMS} regional conference series in probability and statistics},
	title = {Mixture models: theory, geometry, and applications},
	isbn = {978-0-940600-32-4},
	shorttitle = {Mixture models},
	number = {v. 5},
	publisher = {Institute of Mathematical Statistics ; American Statistical Association},
	author = {Lindsay, Bruce G.},
	year = {1995},
}

@article{lo_1984_ClassBayesianNonparametric,
	title = {On a {Class} of {Bayesian} {Nonparametric} {Estimates}: {I}. {Density} {Estimates}},
	volume = {12},
	issn = {0090-5364},
	shorttitle = {On a {Class} of {Bayesian} {Nonparametric} {Estimates}},
	doi = {10.1214/aos/1176346412},
	number = {1},
	urldate = {2026-05-28},
	journal = {The Annals of Statistics},
	author = {Lo, Albert Y.},
	month = mar,
	year = {1984},
}

@article{lindeberg_1994_ScalespaceTheoryBasic,
	title = {Scale-Space Theory: A Basic Tool for Analyzing Structures at Different Scales},
	volume = {21},
	issn = {0266-4763, 1360-0532},
	shorttitle = {Scale-space theory},
	doi = {10.1080/757582976},
	language = {en},
	number = {1-2},
	urldate = {2026-05-28},
	journal = {Journal of Applied Statistics},
	author = {Lindeberg, Tony},
	month = jan,
	year = {1994},
	pages = {225--270},
}

@article{koenderink_1984_StructureImages,
	title = {The Structure of Images},
	volume = {50},
	copyright = {http://www.springer.com/tdm},
	issn = {0340-1200, 1432-0770},
	url = {http://link.springer.com/10.1007/BF00336961},
	doi = {10.1007/BF00336961},
	language = {en},
	number = {5},
	urldate = {2026-05-28},
	journal = {Biological Cybernetics},
	author = {Koenderink, Jan J.},
	month = aug,
	year = {1984},
	pages = {363--370},
}

@article{kiefer_1956_ConsistencyMaximumLikelihood,
	title = {Consistency of the {Maximum} {Likelihood} {Estimator} in the {Presence} of {Infinitely} {Many} {Incidental} {Parameters}},
	volume = {27},
	issn = {0003-4851},
	doi = {10.1214/aoms/1177728066},
	language = {en},
	number = {4},
	urldate = {2026-05-28},
	journal = {The Annals of Mathematical Statistics},
	author = {Kiefer, J. and Wolfowitz, J.},
	month = dec,
	year = {1956},
	pages = {887--906},
}

@article{jordan_1998_VariationalFormulationFokkerPlanck,
	title = {The {Variational} {Formulation} of the {Fokker}--{Planck} {Equation}},
	volume = {29},
	issn = {0036-1410, 1095-7154},
	doi = {10.1137/S0036141096303359},
	language = {en},
	number = {1},
	urldate = {2026-05-28},
	journal = {SIAM Journal on Mathematical Analysis},
	author = {Jordan, Richard and Kinderlehrer, David and Otto, Felix},
	month = jan,
	year = {1998},
	pages = {1--17},
}

@inproceedings{ho_2020_DenoisingDiffusionProbabilistic,
	title = {Denoising {Diffusion} {Probabilistic} {Models}},
	volume = {33},
	url = {https://proceedings.neurips.cc/paper_files/paper/2020/file/4c5bcfec8584af0d967f1ab10179ca4b-Paper.pdf},
	booktitle = {Advances in {Neural} {Information} {Processing} {Systems}},
	publisher = {Curran Associates, Inc.},
	author = {Ho, Jonathan and Jain, Ajay and Abbeel, Pieter},
	editor = {Larochelle, H. and Ranzato, M. and Hadsell, R. and Balcan, M.F. and Lin, H.},
	year = {2020},
	pages = {6840--6851},
}

@book{hartigan_1975_ClusteringAlgorithms,
	address = {New York},
	series = {Wiley series in probability and mathematical statistics},
	title = {Clustering Algorithms},
	isbn = {978-0-471-35645-5},
	publisher = {Wiley},
	author = {Hartigan, John A.},
	year = {1975},
}

@article{hall_2001_CalibrationSilvermansTest,
	title = {On the {Calibration} of {Silverman}'s {Test} for {Multimodality}},
	volume = {11},
	issn = {1017-0405, 1996-8507},
	url = {https://www3.stat.sinica.edu.tw/statistica/j11n2/j11n28/j11n28.htm},
	number = {2},
	urldate = {2026-05-28},
	journal = {Statistica Sinica},
	publisher = {Institute of Statistical Science, Academia Sinica},
	author = {Hall, Peter and York, Matthew},
	month = apr,
	year = {2001},
	pages = {515--536},
}

@article{gine_2010_ConfidenceBandsDensity,
	title = {Confidence bands in density estimation},
	volume = {38},
	issn = {0090-5364},
	doi = {10.1214/09-AOS738},
	number = {2},
	urldate = {2026-05-28},
	journal = {The Annals of Statistics},
	author = {Giné, Evarist and Nickl, Richard},
	month = apr,
	year = {2010},
}

@article{genovese_2016_NonParametricInferenceDensity,
	title = {Non-{Parametric} {Inference} for {Density} {Modes}},
	volume = {78},
	issn = {1369-7412, 1467-9868},
	doi = {10.1111/rssb.12111},
	language = {en},
	number = {1},
	urldate = {2026-05-28},
	journal = {Journal of the Royal Statistical Society Series B: Statistical Methodology},
	author = {Genovese, Christopher R. and Perone-Pacifico, Marco and Verdinelli, Isabella and Wasserman, Larry},
	month = jan,
	year = {2016},
	pages = {99--126},
}

@article{genovese_2014_NonparametricRidgeEstimation,
	title = {Nonparametric ridge estimation},
	volume = {42},
	issn = {0090-5364},
	doi = {10.1214/14-AOS1218},
	number = {4},
	urldate = {2026-05-28},
	journal = {The Annals of Statistics},
	author = {Genovese, Christopher R. and Perone-Pacifico, Marco and Verdinelli, Isabella and Wasserman, Larry},
	month = aug,
	year = {2014},
}

@article{fukunaga_1975_EstimationGradientDensity,
	title = {The Estimation of the Gradient of a Density Function, With Applications in Pattern Recognition},
	volume = {21},
	issn = {0018-9448, 1557-9654},
	doi = {10.1109/TIT.1975.1055330},
	number = {1},
	urldate = {2026-05-28},
	journal = {IEEE Transactions on Information Theory},
	author = {Fukunaga, K. and Hostetler, L.},
	month = jan,
	year = {1975},
	pages = {32--40},
}

@article{fraley_2002_ModelBasedClusteringDiscriminant,
	title = {Model-{Based} {Clustering}, {Discriminant} {Analysis}, and {Density} {Estimation}},
	volume = {97},
	issn = {0162-1459, 1537-274X},
	doi = {10.1198/016214502760047131},
	language = {en},
	number = {458},
	urldate = {2026-05-28},
	journal = {Journal of the American Statistical Association},
	author = {Fraley, Chris and Raftery, Adrian E},
	month = jun,
	year = {2002},
	pages = {611--631},
}

@article{fisher_2001_ModeTestingExcess,
	title = {Mode Testing via the Excess Mass Estimate},
	volume = {88},
	issn = {0006-3444, 1464-3510},
	doi = {10.1093/biomet/88.2.499},
	language = {en},
	number = {2},
	urldate = {2026-05-28},
	journal = {Biometrika},
	author = {Fisher, N. I. and Marron, J. S.},
	month = jun,
	year = {2001},
	pages = {499--517},
}

@article{figueiredo_2002_UnsupervisedLearningFinite,
	title = {Unsupervised {Learning} of {Finite} {Mixture} {Models}},
	volume = {24},
	issn = {0162-8828, 2160-9292},
	doi = {10.1109/34.990138},
	number = {3},
	urldate = {2026-05-28},
	journal = {IEEE Transactions on Pattern Analysis and Machine Intelligence},
	author = {Figueiredo, M.A.T. and Jain, A.K.},
	month = mar,
	year = {2002},
	pages = {381--396},
}

@article{ferguson_1973_BayesianAnalysisNonparametric,
	title = {A {Bayesian} {Analysis} of {Some} {Nonparametric} {Problems}},
	volume = {1},
	issn = {0090-5364},
	doi = {10.1214/aos/1176342360},
	number = {2},
	urldate = {2026-05-28},
	journal = {The Annals of Statistics},
	author = {Ferguson, Thomas S.},
	month = mar,
	year = {1973},
}

@article{fasy_2014_ConfidenceSetsPersistence,
	title = {Confidence Sets for Persistence Diagrams},
	pages = {2301--2339},
	volume = {42},
	issn = {0090-5364},
	doi = {10.1214/14-AOS1252},
	number = {6},
	urldate = {2026-05-28},
	journal = {The Annals of Statistics},
	author = {Fasy, Brittany Terese and Lecci, Fabrizio and Rinaldo, Alessandro and Wasserman, Larry and Balakrishnan, Sivaraman and Singh, Aarti},
	month = dec,
	year = {2014},
}

@article{escobar_1995_BayesianDensityEstimation,
	title = {Bayesian {Density} {Estimation} and {Inference} {Using} {Mixtures}},
	volume = {90},
	issn = {0162-1459, 1537-274X},
	doi = {10.1080/01621459.1995.10476550},
	language = {en},
	number = {430},
	urldate = {2026-05-28},
	journal = {Journal of the American Statistical Association},
	author = {Escobar, Michael D. and West, Mike},
	month = jun,
	year = {1995},
	pages = {577--588},
}

@inproceedings{eldridge_2015_HartiganConsistencyMerge,
	address = {Paris, France},
	series = {Proceedings of {Machine} {Learning} {Research}},
	title = {Beyond {Hartigan} {Consistency}: {Merge} {Distortion} {Metric} for {Hierarchical} {Clustering}},
	volume = {40},
	url = {https://proceedings.mlr.press/v40/Eldridge15.html},
	booktitle = {Proceedings of the 28th Conference on Learning Theory},
	publisher = {PMLR},
	author = {Eldridge, Justin and Belkin, Mikhail and Wang, Yusu},
	editor = {Gr{\"u}nwald, Peter and Hazan, Elad and Kale, Satyen},
	month = jul,
	year = {2015},
	pages = {588--606},
}

@article{efron_1979_BootstrapMethodsAnother,
	pages = {1--26},
	title = {Bootstrap {Methods}: {Another} {Look} at the {Jackknife}},
	volume = {7},
	issn = {0090-5364},
	shorttitle = {Bootstrap {Methods}},
	doi = {10.1214/aos/1176344552},
	number = {1},
	urldate = {2026-05-28},
	journal = {The Annals of Statistics},
	author = {Efron, B.},
	month = jan,
	year = {1979},
}

@book{edelsbrunner_2010_ComputationalTopologyIntroduction,
	address = {Providence, R.I},
	title = {Computational {Topology}: {An} {Introduction}},
	isbn = {978-0-8218-4925-5},
	shorttitle = {Computational topology},
	publisher = {American Mathematical Society},
	author = {Edelsbrunner, Herbert and Harer, J.},
	year = {2010},
	note = {edelsbrunner\_2010\_ComputationalTopologyIntroduction
OCLC: ocn427757156},
}

@article{duong_2008_FeatureSignificanceMultivariate,
	title = {Feature Significance for Multivariate Kernel Density Estimation},
	volume = {52},
	copyright = {https://www.elsevier.com/tdm/userlicense/1.0/},
	issn = {01679473},
	url = {https://linkinghub.elsevier.com/retrieve/pii/S0167947308001503},
	doi = {10.1016/j.csda.2008.02.035},
	language = {en},
	number = {9},
	urldate = {2026-05-28},
	journal = {Computational Statistics \& Data Analysis},
	author = {Duong, Tarn and Cowling, Arianna and Koch, Inge and Wand, M. P.},
	month = may,
	year = {2008},
	pages = {4225--4242},
}

@book{cover_2005_ElementsInformationTheory,
	edition = {1},
	title = {Elements of {Information} {Theory}},
	copyright = {http://doi.wiley.com/10.1002/tdm\_license\_1.1},
	isbn = {978-0-471-24195-9 978-0-471-74882-3},
	url = {https://onlinelibrary.wiley.com/doi/book/10.1002/047174882X},
	doi = {10.1002/047174882X},
	language = {en},
	urldate = {2026-05-28},
	publisher = {Wiley},
	author = {Cover, Thomas M. and Thomas, Joy A.},
	month = sep,
	year = {2005},
}

@article{comaniciu_2002_MeanShiftRobust,
	title = {Mean Shift: A Robust Approach Toward Feature Space Analysis},
	volume = {24},
	copyright = {https://ieeexplore.ieee.org/Xplorehelp/downloads/license-information/IEEE.html},
	issn = {01628828},
	shorttitle = {Mean shift},
	doi = {10.1109/34.1000236},
	number = {5},
	urldate = {2026-05-28},
	journal = {IEEE Transactions on Pattern Analysis and Machine Intelligence},
	author = {Comaniciu, D. and Meer, P.},
	month = may,
	year = {2002},
	pages = {603--619},
}

@article{cheng_1995_MeanShiftMode,
	title = {Mean Shift, Mode Seeking, and Clustering},
	volume = {17},
	copyright = {https://ieeexplore.ieee.org/Xplorehelp/downloads/license-information/IEEE.html},
	issn = {01628828},
	url = {http://ieeexplore.ieee.org/document/400568/},
	doi = {10.1109/34.400568},
	number = {8},
	urldate = {2026-05-28},
	journal = {IEEE Transactions on Pattern Analysis and Machine Intelligence},
	author = {Cheng, Yizong},
	month = aug,
	year = {1995},
	pages = {790--799},
}

@article{chen_2015_AsymptoticTheoryDensity,
	title = {Asymptotic theory for density ridges},
	volume = {43},
	issn = {0090-5364},
	doi = {10.1214/15-AOS1329},
	number = {5},
	urldate = {2026-05-28},
	journal = {The Annals of Statistics},
	author = {Chen, Yen-Chi and Genovese, Christopher R. and Wasserman, Larry},
	month = oct,
	year = {2015},
}

@article{chazal_2018_RobustTopologicalInference,
	title = {Robust Topological Inference: Distance to a Measure and Kernel Distance},
	volume = {18},
	url = {https://jmlr.org/papers/v18/15-484.html},
	number = {159},
	journal = {Journal of Machine Learning Research},
	author = {Chazal, Fr{\'e}d{\'e}ric and Fasy, Brittany and Lecci, Fabrizio and Michel, Bertrand and Rinaldo, Alessandro and Wasserman, Larry},
	year = {2018},
	pages = {1--40},
}

@article{chazal_2011_GeometricInferenceProbability,
	title = {Geometric {Inference} for {Probability} {Measures}},
	volume = {11},
	issn = {1615-3375, 1615-3383},
	doi = {10.1007/s10208-011-9098-0},
	language = {en},
	number = {6},
	urldate = {2026-05-28},
	journal = {Foundations of Computational Mathematics},
	author = {Chazal, Fr{\'e}d{\'e}ric and Cohen-Steiner, David and M{\'e}rigot, Quentin},
	month = dec,
	year = {2011},
	pages = {733--751},
}

@article{chaudhuri_2000_ScaleSpaceView,
	title = {Scale Space View of Curve Estimation},
	pages = {408--428},
	volume = {28},
	issn = {0090-5364},
	doi = {10.1214/aos/1016218224},
	number = {2},
	urldate = {2026-05-28},
	journal = {The Annals of Statistics},
	author = {Chaudhuri, Probal and Marron, J. S.},
	month = apr,
	year = {2000},
}

@article{chaudhuri_1999_SiZerExplorationStructures,
	title = {{SiZer} for {Exploration} of {Structures} in {Curves}},
	volume = {94},
	issn = {0162-1459, 1537-274X},
	doi = {10.1080/01621459.1999.10474186},
	language = {en},
	number = {447},
	urldate = {2026-05-28},
	journal = {Journal of the American Statistical Association},
	author = {Chaudhuri, Probal and Marron, J. S.},
	month = sep,
	year = {1999},
	pages = {807--823},
}

@article{chaudhuri_2014_ConsistentProceduresCluster,
	title = {Consistent {Procedures} for {Cluster} {Tree} {Estimation} and {Pruning}},
	volume = {60},
	issn = {0018-9448, 1557-9654},
	doi = {10.1109/TIT.2014.2361055},
	number = {12},
	urldate = {2026-05-28},
	journal = {IEEE Transactions on Information Theory},
	author = {Chaudhuri, Kamalika and Dasgupta, Sanjoy and Kpotufe, Samory and von Luxburg, Ulrike},
	month = dec,
	year = {2014},
	pages = {7900--7912},
}

@inproceedings{chaudhuri_2010_RatesConvergenceCluster,
	address = {Red Hook, NY},
	title = {Rates of Convergence for the Cluster Tree},
	booktitle = {Advances in Neural Information Processing Systems},
	volume = {23},
	isbn = {978-1-61782-380-0},
	publisher = {Curran Associates Inc.},
	author = {Chaudhuri, Kamalika and Dasgupta, Sanjoy},
	year = {2010},
	pages = {343--351},
	url = {https://proceedings.neurips.cc/paper_files/paper/2010/hash/b534ba68236ba543ae44b22bd110a1d6-Abstract.html},
}

@article{chacon_2013_DatadrivenDensityDerivative,
	title = {Data-Driven Density Derivative Estimation, With Applications to Nonparametric Clustering and Bump Hunting},
	volume = {7},
	issn = {1935-7524},
	doi = {10.1214/13-EJS781},
	urldate = {2026-05-28},
	journal = {Electronic Journal of Statistics},
	author = {Chac{\'o}n, Jos{\'e} E. and Duong, Tarn},
	month = jan,
	year = {2013},
	pages = {499--532},
}

@incollection{carreira-perpinan_2003_NumberModesGaussian,
	address = {Berlin, Heidelberg},
	title = {On the {Number} of {Modes} of a {Gaussian} {Mixture}},
	volume = {2695},
	isbn = {978-3-540-40368-5 978-3-540-44935-5},
	doi = {10.1007/3-540-44935-3_44},
	language = {en},
	urldate = {2026-05-28},
	booktitle = {Scale {Space} {Methods} in {Computer} {Vision}},
	publisher = {Springer Berlin Heidelberg},
	author = {Carreira-Perpi{\~n}{\'a}n, Miguel A. and Williams, Christopher K. I.},
	editor = {Griffin, Lewis D. and Lillholm, Martin},
	year = {2003},
	pages = {625--640},
}

@article{carlsson_2009_TopologyData,
	title = {Topology and Data},
	volume = {46},
	issn = {1088-9485, 0273-0979},
	doi = {10.1090/S0273-0979-09-01249-X},
	language = {en},
	number = {2},
	urldate = {2026-05-28},
	journal = {Bulletin of the American Mathematical Society},
	author = {Carlsson, Gunnar},
	month = jan,
	year = {2009},
	pages = {255--308},
}

@article{campello_2015_HierarchicalDensityEstimates,
	title = {Hierarchical {Density} {Estimates} for {Data} {Clustering}, {Visualization}, and {Outlier} {Detection}},
	volume = {10},
	issn = {1556-4681, 1556-472X},
	doi = {10.1145/2733381},
	language = {en},
	number = {1},
	urldate = {2026-05-28},
	journal = {ACM Transactions on Knowledge Discovery from Data},
	author = {Campello, Ricardo J. G. B. and Moulavi, Davoud and Zimek, Arthur and Sander, Jörg},
	month = jul,
	year = {2015},
	pages = {1--51},
}

@article{botev_2010_KernelDensityEstimation,
	title = {Kernel density estimation via diffusion},
	volume = {38},
	issn = {0090-5364},
	doi = {10.1214/10-AOS799},
	number = {5},
	urldate = {2026-05-28},
	journal = {The Annals of Statistics},
	author = {Botev, Z. I. and Grotowski, J. F. and Kroese, D. P.},
	month = oct,
	year = {2010},
}

@article{biroli_2026_KernelDensityEstimators,
	title = {Kernel {Density} {Estimators} in {Large} {Dimensions}},
	volume = {8},
	issn = {2577-0187},
	doi = {10.1137/24M1703677},
	language = {en},
	number = {1},
	urldate = {2026-05-28},
	journal = {SIAM Journal on Mathematics of Data Science},
	author = {Biroli, Giulio and Mézard, Marc},
	month = mar,
	year = {2026},
	pages = {46--76},
}

@article{bickel_1973_GlobalMeasuresDeviations,
	title = {On {Some} {Global} {Measures} of the {Deviations} of {Density} {Function} {Estimates}},
	volume = {1},
	issn = {0090-5364},
	doi = {10.1214/aos/1176342558},
	number = {6},
	urldate = {2026-05-28},
	journal = {The Annals of Statistics},
	author = {Bickel, P. J. and Rosenblatt, M.},
	month = nov,
	year = {1973},
}

@book{bakry_2014_AnalysisGeometryMarkov,
	address = {Cham},
	series = {Grundlehren der mathematischen {Wissenschaften}},
	title = {Analysis and {Geometry} of {Markov} {Diffusion} {Operators}},
	volume = {348},
	isbn = {978-3-319-00226-2 978-3-319-00227-9},
	doi = {10.1007/978-3-319-00227-9},
	language = {en},
	urldate = {2026-05-28},
	publisher = {Springer International Publishing},
	author = {Bakry, Dominique and Gentil, Ivan and Ledoux, Michel},
	year = {2014},
}

@article{babaud_1986_UniquenessGaussianKernel,
	title = {Uniqueness of the {Gaussian} {Kernel} for {Scale}-{Space} {Filtering}},
	volume = {PAMI-8},
	issn = {0162-8828, 2160-9292},
	doi = {10.1109/TPAMI.1986.4767749},
	number = {1},
	urldate = {2026-05-28},
	journal = {IEEE Transactions on Pattern Analysis and Machine Intelligence},
	author = {Babaud, Jean and Witkin, Andrew P. and Baudin, Michel and Duda, Richard O.},
	month = jan,
	year = {1986},
	pages = {26--33},
}

@article{azzalini_2007_ClusteringNonparametricDensity,
	title = {Clustering via nonparametric density estimation},
	volume = {17},
	issn = {0960-3174, 1573-1375},
	doi = {10.1007/s11222-006-9010-y},
	language = {en},
	number = {1},
	urldate = {2026-05-28},
	journal = {Statistics and Computing},
	author = {Azzalini, Adelchi and Torelli, Nicola},
	month = mar,
	year = {2007},
	pages = {71--80},
}

@article{anderson_1982_ReversetimeDiffusionEquation,
	title = {Reverse-time diffusion equation models},
	volume = {12},
	issn = {03044149},
	doi = {10.1016/0304-4149(82)90051-5},
	language = {en},
	number = {3},
	urldate = {2026-05-28},
	journal = {Stochastic Processes and their Applications},
	author = {Anderson, Brian D.O.},
	month = may,
	year = {1982},
	pages = {313--326},
}

@article{ameijeiras-alonso_2019_ModeTestingCritical,
	title = {Mode Testing, Critical Bandwidth and Excess Mass},
	volume = {28},
	issn = {1133-0686, 1863-8260},
	doi = {10.1007/s11749-018-0611-5},
	language = {en},
	number = {3},
	urldate = {2026-05-28},
	journal = {TEST},
	author = {Ameijeiras-Alonso, Jose and Crujeiras, Rosa M. and Rodr{\'i}guez-Casal, Alberto},
	month = sep,
	year = {2019},
	pages = {900--919},
}

@article{abramson_1982_BandwidthVariationKernel,
	title = {On {Bandwidth} {Variation} in {Kernel} {Estimates}-{A} {Square} {Root} {Law}},
	volume = {10},
	issn = {0090-5364},
	doi = {10.1214/aos/1176345986},
	number = {4},
	urldate = {2026-05-28},
	journal = {The Annals of Statistics},
	author = {Abramson, Ian S.},
	month = dec,
	year = {1982},
}

@book{vandervaart_1996_WeakConvergenceEmpirical,
	address = {New York},
	series = {Springer series in statistics},
	title = {Weak Convergence and Empirical Processes: With Applications to Statistics},
	isbn = {978-0-387-94640-5},
	doi = {10.1007/978-1-4757-2545-2},
	url = {https://link.springer.com/book/10.1007/978-1-4757-2545-2},
	publisher = {Springer},
	author = {van der Vaart, A. W. and Wellner, Jon A.},
	year = {1996},
}

@book{villani_2009_OptimalTransportOld,
	address = {Berlin},
	series = {Grundlehren der mathematischen {Wissenschaften}},
	title = {Optimal {Transport}: {Old} and {New}},
	isbn = {978-3-540-71049-3},
	shorttitle = {Optimal transport},
	number = {338},
	publisher = {Springer},
	author = {Villani, Cédric},
	year = {2009},
	note = {villani\_2009\_OptimalTransportOld
OCLC: ocn244421231},
}

@article{edelsbrunner_2002_TopologicalPersistenceSimplification,
	title = {Topological {Persistence} and {Simplification}},
	volume = {28},
	issn = {0179-5376, 1432-0444},
	url = {http://link.springer.com/10.1007/s00454-002-2885-2},
	doi = {10.1007/s00454-002-2885-2},
	language = {en},
	number = {4},
	urldate = {2022-08-28},
	journal = {Discrete \& Computational Geometry},
	author = {Edelsbrunner, Herbert and Letscher, David and Zomorodian, Afra},
	month = nov,
	year = {2002},
	pages = {511--533},
}

@article{cohensteiner_2007_StabilityPersistenceDiagrams,
	title = {Stability of {Persistence} {Diagrams}},
	volume = {37},
	number = {1},
	journal = {Discrete \& Computational Geometry},
	author = {Cohen-Steiner, David and Edelsbrunner, Herbert and Harer, John},
	year = {2007},
	pages = {103--120},
	doi = {10.1007/s00454-006-1276-5},
}

@inproceedings{cohensteiner_2006_VinesVineyards,
	address = {New York, NY},
	title = {Vines and {Vineyards} by {Updating} {Persistence} in {Linear} {Time}},
	booktitle = {Proceedings of the Twenty-Second Annual Symposium on Computational Geometry},
	author = {Cohen-Steiner, David and Edelsbrunner, Herbert and Morozov, Dmitriy},
	year = {2006},
	pages = {119--126},
	publisher = {ACM},
	doi = {10.1145/1137856.1137877},
}

@article{carlsson_2010_ZigzagPersistence,
	title = {Zigzag {Persistence}},
	volume = {10},
	number = {4},
	journal = {Foundations of Computational Mathematics},
	author = {Carlsson, Gunnar and de Silva, Vin},
	year = {2010},
	pages = {367--405},
	doi = {10.1007/s10208-010-9066-0},
}

@article{steinwart_2023_AdaptiveClusteringKDE,
	title = {Adaptive {Clustering} {Using} {Kernel} {Density} {Estimators}},
	volume = {24},
	number = {275},
	journal = {Journal of Machine Learning Research},
	author = {Steinwart, Ingo and Sriperumbudur, Bharath K. and Thomann, Philipp},
	year = {2023},
	pages = {1--56},
	url = {https://www.jmlr.org/papers/v24/21-1532.html},
}

@article{mclachlan_2019_FiniteMixtureModels,
	title = {Finite {Mixture} {Models}},
	volume = {6},
	issn = {2326-8298, 2326-831X},
	url = {https://www.annualreviews.org/doi/10.1146/annurev-statistics-031017-100325},
	doi = {10.1146/annurev-statistics-031017-100325},
	language = {en},
	number = {1},
	urldate = {2022-02-22},
	journal = {Annual Review of Statistics and Its Application},
	author = {McLachlan, Geoffrey J. and Lee, Sharon X. and Rathnayake, Suren I.},
	month = mar,
	year = {2019},
	note = {mclachlan\_2019\_FiniteMixtureModels},
	pages = {355--378},
}

@article{chacon_2020_ModalAgeStatistics,
  author  = {Chac{\'o}n, Jos{\'e} E.},
  title   = {The Modal Age of Statistics},
  journal = {International Statistical Review},
  year    = {2020},
  volume  = {88},
  number  = {1},
  pages   = {122--141},
  doi     = {10.1111/insr.12340}
}

@article{ameijeiras-alonso_2021_multimode,
  author  = {Ameijeiras-Alonso, Jose and Crujeiras, Rosa M. and Rodr{\'i}guez-Casal, Alberto},
  title   = {{multimode}: An {R} Package for Mode Assessment},
  journal = {Journal of Statistical Software},
  year    = {2021},
  volume  = {97},
  number  = {9},
  pages   = {1--32},
  doi     = {10.18637/jss.v097.i09}
}
